\DeclareMathOperator{\supp}{supp}
\newtheorem{axiom}{Axiom}[section]
\newtheorem{definition}{Definition}[section]
\newtheorem{assumption}{Assumption}[section]
\newtheorem{remark}{Remark}[section]
\newtheorem{theorem}{Theorem}[section]
\newtheorem{lemma}{Lemma}[section]
\newtheorem{corollary}{Corollary}[theorem]
\begin{document}

\title[Finite speed of propagation in Einstein paradigm]
{Finite speed of propagation in Degenerate  Einstein Brownian Motion Model}

\author[I. G. Hevage]{Isanka Garli Hevage $^{\dag}$}
\address{Department of Mathematics and Statistics, Texas Tech University, TX 79409--1042, U. S. A.} 
\email{isankaupul.garlihevage@ttu.edu, akif.ibraguimov@ttu.edu}


\author[A. Ibraguimov]{ Akif Ibragimov}

\thanks{\authormark{\dag} Corresponding author.}

\subjclass[2020]{35K57 , 35K65, 35Q35, 35Q76}
\keywords{Nonlinear PDE, Einstein paradigm, Brownian motion, finite speed of propagation}

\begin{abstract}
We considered qualitative behaviour of  the generalization of Einstein's model of Brownian motion when the key parameter of the time interval of \textit{free jump} degenerates. Fluids will be characterised by number of particles per unit volume (density of fluid) at point of observation.  Degeneration of the  phenomenon manifests in two scenarios: a) flow of the fluid, which is highly dispersing like a non-dense gas and
b) flow of fluid far away from the source of flow, when the velocity of the flow is incomparably smaller than the gradient of the density.
First, we will show that both types of flows can be modeled  using the Einstein paradigm. 
We will investigate the question: What features will  particle flow exhibit if the time interval of the\textit{ free jump} is inverse proportional to the density and its gradient ?  We will show that in this scenario, the flow exhibits localization property, namely: if at some moment of time $t_0$ in the region, the gradient of the density or density itself is equal to zero, then for some $T$ during time interval $[ t_{0}, t_0 + T]$ there is no flow in the region. 
This directly links to Barenblatt's finite speed of propagation property for the degenerate equation. The method of the proof is very different from Barenblatt's method and based on the application of Ladyzhenskaya - De Giorgi iterative scheme and Vespri - Tedeev technique. From PDE point of view it assumed that solution exists in appropriate Sobolev type of space. 
\end{abstract}
\maketitle
\section{Introduction}\label{intro}
In his celebrated work from 1905,  Einstein   introduced frame work of the random motion of particles suspended in  media.
Einstein's paradigm took a pivotal role in this study  as we are  considering an extension of his thought experiment to explain localization properties which define many  physical problems. Einstein introduced three key parameters which characterize  random movement of the particles , i.e., non-colliding time interval ($\tau$), value of changes in the length  of particle displacement ($ \Delta$) which are ``free of collision" with other particles and media  during time interval $[t, t+\tau]$ and   frequency  ($\varphi(\Delta)$) of the  occurrence  of the ``free of collision" displacements $\Delta$ . In our work we interpret "non-collision displacements",  which we call hereafter as a \emph{free jump}. In order to be specific, we state all above in the form of  Einstein's Brownian motion axioms as a definition of \textit{free jump}.
\begin{definition} \normalfont
\begin{enumerate}\label{Einstein-assump}\
\item
    There exists a time interval $\tau$, which is very small compared to the time interval over which the system is observed, but large enough that the motions performed by a particle during two consecutive time intervals $\tau$ can be considered as mutually independent events. 
   \item
   Length of non-colliding  jumps  corresponding to time interval $\tau$, which we call ``free jumps" is  $\Delta$.
\end{enumerate}
\end{definition}
Traditionally physicists use free path to determine pass of non-collision of particles, as in \cite{free-pass}.
In this sense the term \textit{free jump} has the same meaning as free path. To make the definition of \textit{free jump} transparent,  we cite the Einstein's statement  as follows: ``\textit{Evidently it must be assumed that each single
particle executes a movement which is independent
of the movement of all other particles ; the
movements of one and the same particle after
different intervals of time must be considered as
mutually independent processes, so long as we
think of these intervals of time as being chosen
not too small.
We will introduce a time-interval $\tau$ in our discussion,
which is to be very small compared with
the observed interval of time, but, nevertheless,
of such a magnitude that the movements executed
by a particle in two consecutive intervals of time
r are to be considered as mutually independent
phenomena
}``. In order not to designate this two definition we use term \textit{free jump} instead of free path. Next, Einstein postulates that 
\begin{axiom}\label{conserv-law} \normalfont
At any point of observation $x$ at time $t+\tau$ total number of particles $u(x,t)$ in the unit volume $dv$ containing point $x$  is equal to accumulated  total number of particles which has free jumps $\Delta$ weighted  by frequency $\varphi(\Delta)$ from the same point $x$  at time $t$ 
 \begin{equation}\label{Einstein_conserv_eq}
 \begin{aligned}
     u(x, t+\tau) \cdot dv =  
\left[\int_{\mathbb{R}} u(x+ \Delta, t) \varphi(\Delta) d \Delta \right]\cdot dv .
 \end{aligned}
\end{equation} 
\end{axiom}
Regarding frequency Einstein also postulates the following  axioms.
\begin{axiom} {whole universes axiom:} \normalfont
\begin{equation}\label{univ ax}
\begin{aligned}
\int_{\mathbb{R}} \varphi(\Delta) d \Delta = 1. 
\end{aligned}
\end{equation}
\end{axiom}
\begin{axiom}\label{Even-Ein}{Evenness of the frequency:} \normalfont
\begin{align}
  \varphi(-\Delta)=\varphi(\Delta).
\end{align}
Therefore expected value of length in free jump $ \Delta_{e} = 0 $.
\end{axiom}
Assuming in the Axiom \ref{conserv-law}, evenness of the $\varphi(\Delta)$ and smoothness of  the  function $u(x,t)$ Einstein   approximates function $u(x,t)$ with the solution  of diffusivity  equation, which is often cited as  the Brownian motion equation 
\begin{equation}
    u_{t}  = D u_{xx}.\label{diff-eq}
\end{equation}
where $ \displaystyle D \triangleq \frac{1}{{\tau} }{\left[\int_{{\mathbb R}}  \frac{{\Delta}^{2}}{2} \varphi (\Delta) d\Delta\right]} $ 
is called Einstein's diffusion coefficient. If $D(x,t)$ is space and time dependent function and   ${1}/{c_{0}} > D > c_{0} > 0$ for some constant $c_0$ then, due to strong maximum principle, the solutions of equation \eqref{diff-eq} exhibit the so called important feature: " infinite speed of propagation". In many cases this can lead to  non-accurate  and non physical interpretations of the observations. In this article we will show that there exists an  extended Einstein paradigm of random motion such that solution of the corresponding equation will exhibit  finite speed of propagation. Namely, in our thought experiment (see Section \ref{eistein-paradigm}), we consider the case when duration of the time interval  $\tau$  of free jump  is inverse proportional to density $u$ (number of particles in unit volume) and its gradient $|\nabla u|$. We will show rigorously that this will lead to localisation property which in many sources is called  finite speed of propagation: \emph{If initial data has a compact support $K$, then for point $x_0\notin K$ there  exists $t_{x_0}$ such that solution $u(x_0,t)=0$ for $t\leq t_{x_0}$} (see \cite{EVAN}).\\
It is known that many of the physical phenomena in nature,  governed by diffusion and absorption, can be expressed by nonlinear parabolic partial differential inequalities, whose solution  exhibits finite speed of propagation properties. For example, in porous media if fluid is gas, then   perturbed on the well pressure will not immediately reach area away from source  of perturbation.  Corresponding diffusivity equation for pressure function has coefficients which degenerate as pressure vanishes (see \cite{Barenblatt-52},\cite{Barenblatt-96},\cite{Barenblatt-ss}). 
Another interesting example is the so called  pre-Darcy flow, which  occurs in far from well zone when the gradient of  pressure is so "small" that  particles of the liquid are not moving , i.e, has zero velocity (see \cite{pre-darcy-RRR},\cite{blosh},\cite{non-Newtonian}). In both   cases, from point of view of  Einstein paradigm the time interval of "free jump" of the particles composing fluid is infinitely big. 
\begin{remark}
It is worth  to mention that the number of particles per unit volume is a monotone function with respect to the density of the fluid, and correspondingly is monotone function with respect to the pressure for isothermic fluids.
\end{remark}
In the present work, we reconsider the above phenomena, and their governing nonlinear parabolic equations from the point of view of generalized Einstein's random walk model in a continuous medium with diffusion, drift and absorption or  reaction.\\
Starting from basic stochastic principles, we  derive a generic degenerating via solution and its gradient parabolic equation. This solution  models concentration of the particles in a unit volume.  We then  prove the strong localization property by hypothesising  the process  such that  in the Einstein framework  time interval of a "free" movement of particles depends on density particles  and its gradient. 
This approach can be interpreted as a complementary conceptual derivation of the governing equation for the flows, but without appealing to Fick's, Darcy's or Fourier's type laws, the continuity (or conservation of mass) equation, and thermodynamic closures that binds together the various variables of the system. 
We assume that the number of molecules in a liquid is proportional to the concentration of compound of interest,  characterized by a scalar function $u(x,t)$, which depends on the spatial and time coordinates $x$ and $t$, respectively.\\
This Article is organized as follows:
In the Section \ref{eistein-paradigm}  we  formally  introduce definition of random motion which includes: vector $\vec{\Delta} = (\Delta_1,\cdots, \Delta_N)$ of free jumps in $N$ dimensional space, its expected vector $\vec{\Delta_e}$ and some probability distribution $\phi(\vec{\Delta}) $. To derive our nonlinear model we use  the scalar $\tau$ as length of the time interval $[t,t+\tau]$ during which particles are not colliding. Generalizing the Einstein Axioms of mass conservation for the free jump process in  $N$ dimension , we introduce  "conservation of the mass" in \eqref{Einstein_conserv_eq-2} with  absorption or reaction and drift. Next, assuming that density function $u(x,t)$ is sufficiently smooth, we derive the PDE  inequality \eqref{M-1} for function $u(x,t)$ using Taylor's expansion and Caratheodory theorems.
Assuming the co-variances and the expected vector are $u$ independent, we model the process of "non-linear $\alpha-\beta$ jumps" by choosing $\tau$ to be inverse proportional  to density and its gradient in \eqref{tau-nonlin}. This axiom , alongside with assumption  on regularity of density function   allows reduction of conservation of  mass equation into a  nonlinear IBVP for differential inequalities. Then , by proposed method of Tedeev-Vespri  we use De - Giorgi-Ladyzhenskaya  construction  to map original problem for non-divergent differential inequality to  iterative integral inequalities  in Lemma \ref{lad-lemma-1} and Lemma \ref{lad-lemma-2}.  
Based on Lemmas \ref{lad-lemma-1} and Lemma \ref{lad-lemma-2}  in Theorem \ref{combined-ite} , we transform obtained inequalities  into one iterative inequality for specific functional . This Theorem enables us to implement Ladyzhenskaya Lemma to prove  one of the main result in Theorem \ref{I-0 bound} . Then , by Corollary \ref{col-FSP} we show that if initial data has compact support, the solution of the differential inequality \ref{model eq} will exhibit  finite speed of propagation property.
\section {Generalized Einstein paradigm} \label{eistein-paradigm}
In this section  we will extend model for Brownian Motion for dynamical process of  transport, diffusion and absorption with parameters, depending on number of particles and its gradient function. 
\subsection{Mass conservation law}\label{prelim}
Let  $x \in   \mathbb{R}^{N} $ and  $u(x, t)$ be the 
function which represent  the number of particles per unit volume at point $x$ and at time $t$. Consider a particle ($P$) of particular type suspended in the medium of interest. Denote $\mathbb{P}(\tau)$ to be the set of vectors with non-colliding jumps of $P$ corresponding to time interval $\tau$. We call  $\vec{\Delta} 
= \big(\Delta_1, \cdots ,\Delta_N\big)^{T}$ 
to be a '' vector of free jump of particles $P$'' if $\vec{\Delta} \in  \mathbb{P}(\tau)$.
We assume the following extension of the definition \ref{Einstein-assump}:
\begin{assumption} \label{ext-Eins}
\begin{enumerate}
\item All possible interactions between particles during time interval 
$\tau $ are via absorbing thorough surrounding media which may include media itself, other particles, and all possible boundaries. This key parameter $\tau$ in general can depend on the concentration of the particles ($P$) and its gradient and also space coordinate and time itself.  
\item\label{constitive} Time interval of free jumps $\tau$, expected vector $\vec{\Delta}_e$  of  a free jump $\vec{\Delta}$ and probability density function of free jump $\varphi(\vec{\Delta})$  are the only parameters  which characterise process of free jumps. Note that in a view of the definition of the set $\mathbb{P}(\tau)$, if $\vec{\Delta} \notin \mathbb{P}(\tau)$ then  $\varphi(\vec{\Delta})=0$. 
\item\label{absorbtion}
During time interval $[t,t+\tau]$ in the unit volume around the observation point $x$ ,  there are possible bonding and/or absorption with other particles or with the  media which  approximated by integral
\begin{align*}
    \int_t^{t+\tau}A(u(x, s)) \, ds  .
\end{align*}
It is important to state that if the non-linear function  $A(u) > 0, \ A(0)=0 $ then it has the growth constrain which will be introduced in Lemma \ref{lad-lemma-2}.
\end{enumerate}
\end{assumption}

\begin{axiom}  \normalfont
 Whole universe axiom:
 \begin{align}\label{uni-ax}
      \int_{\mathbb{P}(\tau)}\varphi(\vec{\Delta})d\vec{\Delta} = 1 .
 \end{align}
\end{axiom}
Let us define an expected vector of the "jumps" and corresponding co-variance matrix.
\begin{definition} \normalfont
\
\begin{enumerate}
\item 
Expected vector of free jumps 
\begin{equation}\label{exp-L}
\vec{\Delta}_e \triangleq (\Delta_{e}^{1},\Delta_{e}^{2},\dots,\Delta_{e}^{N})^{T} \quad   where \quad  \Delta_{e}^{i} \triangleq \int_{\mathbb{P}(\tau)}{\Delta_{i}} \varphi(\vec{\Delta})d\vec{\Delta} .
\end{equation}
\item Standard Co-variance matrix of a free jump
\begin{align}
\sigma_{ij}^2 \triangleq  \int_{\mathbb{P}(\tau)}\left({\Delta_{i}}-{\Delta}_{e}^{i}\right)\left({\Delta_{j}}-{\Delta}_{e}^{j}\right) \varphi(\vec{\Delta})d\vec{\Delta}\label{var-ij}.
\end{align}
\end{enumerate}
\end{definition}
Evidently $\vec{\Delta}_{e}(x,t)$ and $\sigma_{ij}(x,t)$ depend on space $x$ and time $t$. We postulate generalized Einsteins Axiom for the number of particles found at time $t+\tau$ in the control volume $dv$  contained point $x$ by 
\begin{axiom} \normalfont
\begin{equation}\label{Einstein_conserv_eq-2}
u(x, t+\tau) \cdot dv =  
\left[\int_{\mathbb{P}(\tau)} u(x+ \vec{\Delta}, t) \varphi(\vec{\Delta}) d \vec{\Delta} 
+ 
\int_t^{t+\tau}A(u(x, s),s)ds
\right]\cdot dv  .
\end{equation}
\end{axiom}
Mass conservation law \eqref{Einstein_conserv_eq-2} intuitively is easy to interpret, and it says that at any given point in space $x$ at time $t+\tau$ we will observe the number of particles per unit volume all particles with free jumps from the point $x$ at time $t$,  $+$ density of  particles which "produced" and $-$ density of  particles which "consumed" during time interval $[t,t+\tau].$  For comparison see \cite{Einstein56} (pages 14) first formula with integral.
\begin{remark}
Einstein definition of density of particles as a number of particles per unit volume differ from the fundamental definition of the density of fluid: denser the fluid, denser the number of particles in unit volume in the Einstein Paradigm.
\end{remark}

\subsection{Derivation of diffusion and absorption model with drift}\label{Derive}
Let $\zeta = (\zeta_1,\zeta_2,\cdots,\zeta_N)$ multi-index and  $ x^{\zeta} \triangleq x_{1}^{\zeta_1} \cdot x_{2}^{\zeta_2} \cdot \dots \cdot x_{N}^{\zeta_N}. $ Assuming that $u(x,t)\in C_{x,t}^{2,1} $ we apply Taylor's Expansion, and using \eqref{uni-ax} - \eqref{var-ij} we get  
\begin{align}
\hspace{-0.2 cm}\int_{\mathbb{P}(\tau) }
u(x + \vec{\Delta},t)\varphi(\vec{\Delta})d\vec{\Delta} = &  \nonumber \\
\text{ } u(x+\vec{\Delta}_{e},t) \ 
 + \ & 
\sum_{i\neq j} \sigma_{ij}^{2}  u_{x_{i}x_{j}}{(x+\vec{\Delta}_e,t)}
  + 
\frac{1}{2}\sum_{i =1} \sigma_{i}^{2}u_{x_{i}x_{i}}{(x+\vec{\Delta}_e,t)}  
  +  R_{\zeta}
\label{Taylor-eq}
\end{align} 
where \begin{align}
  R_{\zeta}  \triangleq 
\int_{\mathbb{P}(\tau) }
 \sum_{|\zeta| = 2}   
H_{\zeta}(x,\vec{\Delta},t)(\vec{\Delta}-\vec{\Delta_{e}})^{\zeta}
\varphi(\vec{\Delta})d\vec{\Delta}.\label{R-zeta}
\end{align}
Here
$\lim_{\vec{\Delta}\rightarrow\vec{\Delta_{e}} } H_{\zeta}(x ,\vec{\Delta},t) = 0$. Using \eqref{Taylor-eq} in \eqref{Einstein_conserv_eq-2} we get
\begin{equation}
u(x,t+\tau)- u(x+\vec{\Delta}_{e},t)  
\text{ = }  
 \sum_{i,j=1}^N  a_{ij}(x,t)  {u}_{x_{i} x_{j}} {(x+\vec{\Delta}_e,t)}\text{ + } R_{\zeta}
 \text{ + } 
\int_t^{t+\tau}A(u(x, s),t)ds.
\label{post-Taylor}
\end{equation}
Here
$ \displaystyle a_{ij}(x,t) = \frac{\sigma_{i}^{2}(x,t)}{2}  \text{ if }  i=j, \text{ and } a_{ij}(x,t) = \sigma_{ij}^{2}(x,t) \text{ if }  i\neq j $.
 Moreover, using Holder inequality for \eqref{R-zeta} with $0<l<1$ one can estimate
\begin{align}
R_{\zeta} \leq 
\sum_{|\zeta| = 2} \left[ \int_{\mathbb{P}(\tau)}|(\vec{\Delta}-\vec{\Delta_{e}})^{\zeta}
\varphi(\vec{\Delta})|^{\frac{1}{1-l}} d\vec{\Delta}
\right]^{1-l}  \cdot \left[\int_{\mathbb{P}(\tau) } 
|H_{\zeta}(x,\vec{\Delta},t)|^{\frac{1}{l}} d\vec{\Delta} \right]^{{l}}.
 \label{E-3}
\end{align}
Observe that LHS and RHS in the equation \eqref{post-Taylor} are defined in different points. In order to eliminate this ambiguity and derive the equation at the same point we will assume that $u(x,t) \in C^{3,2}_{x,t}.$ 
Then by Carathéodory's criterion    $\exists$ function $\vec{\psi}^{xxx}_{ij}: \mathbb{R}^{N} \times \mathbb{R}\rightarrow \mathbb{R}^N $  such that 
\begin{equation}
\sum_{i,j=1}^N a_{ij}(x,t) u_{x_{i}x_{j}} {(x+\vec{\Delta}_e,t)} =
\sum_{i,j=1}^N [ \vec{\psi}_{ij}^{xxx}(x,\vec{\Delta}_{e},t)\cdot\vec{\Delta}_{e}] a_{ij}(x,t) \text{ + }
\sum_{i,j=1}^N  a_{ij}(x,t)  u_{x_{i}x_{j}}{(x,t)}\label{E-2}.
\end{equation}
Similarly  $\exists $ functions $\psi^{t},\psi^{tt}, \psi^{x}_{i} \in \mathbb{R}$ and $ \vec{\psi}_{i}^{xx} \in \mathbb{R}^{N}$ such that 
\begin{multline}
u(x, t+ \tau)  - u (x+\vec{\Delta}_e, t) = \\
 [\tau^2 \psi^{tt} (x,t,\tau)]- \sum_{i=1}^{N}  [\vec{\psi}_{i}^{xx}(x,\vec{\Delta}_{e},t)\cdot \vec{\Delta}_{e}]\Delta_{e}^{i}  +
\tau \psi^{t} (x,t,0) 
 - \sum_{i=1}^{N}  \psi_{i}^{x}(x,0,t) \Delta_{e}^{i},
\label{E-1}
\end{multline}
where $\psi^{t} $ and $\psi_{i}^{x} $  are such that $  \lim_ {\nu \rightarrow 0} \psi^{t}(x,t,\nu) =u_{t}(x,t) $ and  $ \lim _{\Delta \rightarrow 0}\psi_{i}^{x}(x,\Delta,t) =  u_{x_{i}}(x,t) $. Moreover, in this article we will assume the term $ |\tau \psi_{tt} |$ negligible with respect to  $|\psi_t|$. Using  \eqref{E-1} in LHS of \eqref{post-Taylor}  and \eqref{E-3},\eqref{E-2}  in RHS of \eqref{post-Taylor} yields
\begin{align}
  u_{t}  - \frac{1}{\tau}\sum_{i=1}^{N}  \Delta_{e}^{i} u_{x_{i}} - \frac{1}{\tau} \sum_{i,j=1}^N   a_{ij}(x,t)  u_{x_{i}x_{j}} 
\leq 
\frac{B}{\tau}  + |A(u)| ,
\label{com-eq}
\end{align}
where
 \begin{equation}
B \triangleq \sum_{i=1}^{N}  [\vec{\psi}_{i}^{xx}(x,\vec{\Delta}_{e},t)\cdot \vec{\Delta}_{e}]\Delta_{e}^{i}
+
\sum_{i,j=1}^N [ \vec{\psi}_{ij}^{xxx}(x,\vec{\Delta}_{e},t)\cdot\vec{\Delta}_{e}] a_{ij}(x,t) 
     +  R_{\zeta} .
\end{equation}
Here $\vec{\Delta}_{e}$ and all functions are subject of growth condition with respect to   $|\nabla u|$. Further we will consider 
\begin{assumption} \label{B-bound}
 $\exists$  constant $C_1 \geq 0$ such that in R.H.S of inequality \eqref{com-eq} function 
 \begin{equation}
   B \leq   C_1|\nabla u(x,t)| , 
   \end{equation}
\end{assumption}
where $ \displaystyle \vert \nabla u \vert = \sqrt{\sum_{i=1}^{N} \bigg \vert \frac{\partial u}{\partial x_{i}} \bigg \vert^{2}}$.
This was introduced from general, mathematical point of view. For interpretation see \cite{lan-deadzone}. From above it follows that the function $u(x,t)$ can be approximated by the differential inequality
\begin{align}
   {u_t} - 
   \frac{1}{\tau}\sum_{i=1}^{N} b_{i}(x,t)  { u_ {x_{i}}} - 
   \frac{1}{\tau}  \sum_{i,j=1}^N &  a_{ij}(x,t) u_{x_{i}x_{j}}
    \leq 
\frac{1}{\tau} C_1|\nabla u| +|A(u)|, 
 \label{M-1} 
\end{align}
where $ b_{i}(x,t) = \Delta_{e}^{i}\label{b-i}$. 
 In forthcoming sections we will study a qualitative properties of the function $u(x,t)$ which solves inequality \eqref{M-1}, without refereeing to the origin of the this process which led to \eqref{M-1}. Obtained results then will provide interpretation for for physical processes which can lay under Einstein paradigm. 
 \begin{remark} 
 Partial differential inequality (PDI)  \eqref{M-1} has no unique solution for any initial data and boundary conditions but any solution which satisfies this inequality exhibits finite speed of propagation under the condition on $\tau$ in \eqref{tau-nonlin} with appropriately chosen $\alpha$ and $\beta$ in Lemma \ref{main-ineq-u} and Lemma \ref{lad-lemma-2}. In contrast to PDE s, PDI s has much wider application and corresponding dynamical systems will also exhibit localization property (see \cite{bro-tan},\cite{filip-diff}).\\
The PDI \eqref{M-1} has no unique solution but in-spite of that any solution of this inequality with  initial data having compact support will exhibit finite speed of propagation under our conditions on Non-linearity. RHS of \eqref{M-1} has two terms. The second term associated to the reaction term of Einstein type dynamic processes, whether first term is modeling non-linear drift, which bounded by $\nabla u$. Therefore inequality \eqref{M-1} has the generalization of the Einstein Brownian Motion model  not only due to  $\tau$, but also  because of the non-linear drift and reaction- absorption in the system. 
  \end{remark}
\subsection{Non-linear degenerate  partial differential inequality}\label{non-lin}
 In our model  $\tau$, $\vec{\Delta}_{e}$ and $\varphi$  are key characteristics of the process dynamics. Not only they can be functions of spatial and time variables  $x$ and $t$, properties of the medium and its boundary but also functions of dependent variables  and their derivatives. In this article we assume that  the process of the "free' jumps is  characterised only by the time interval of free jump ($\tau$). In our sought experiment we claim the process is such the time interval of free jumps $(\tau)$ is inverse proportional to number of particles $u(x,t)$  , and its gradient. This constrain can be intuitively justified by following arguments:\\
 a) Time interval of the free jumps of non-colliding particles increases as number of particles decreases.
 
 \begin{remark}
 Assuming the number of particles per unit volume to be proportional to density, then time interval of free jump will be reciprocal to density. In this way Einstein's paradigm can be used for characterization of highly "disperse" gases. It is known that the density of the disperse gases in porous media is proportional to pressure (see \cite{bar-Ent-Ryz}). This lead to so called porous media equation for pressure function, with degeneration in diffusion coefficient (see \cite{zelbar58}). Equation obtained from Einstein's thought experiment can be  mapped to a Barenblatt type equation for density function. This provides one more justification for assumption a) above. It is worth to mentioned that we also used Einstein's model with "free jumps" to interpret  the multi-component flow in the 1-dimensional tube, and provided method of the evaluation of the parameter $\tau$ based on spectrometer data (see \cite{Akif-Padgett}).   
 \end{remark}
 b) The Smaller the  gradient  of the density of particle lesser the  "mobility" of the fluid particles and, consequently the bigger the time interval of free jumps of non-colliding particles. 
 \begin{remark}
 To justify this assumption we  correlate the number of particles per unit volume through density to fluid pressure in porous media which will  relate Einstein's non-linear equation to so called pre-Darcy flow (see \cite{blosh},\cite{EIR}).
  \end{remark}
 The major question which we address in this article is following:
 \textit{Can the degeneration of the $\tau,$  with respect to solution or/and its gradient lead to localisation property of the particle-transport due to diffusion ?} To prove this localisation property we postulate the following dependence of the $\tau$ which stated in form of definition. 
\begin{definition}\label{tau} \normalfont
 Let $ \alpha > 0 $ and $ \beta \geq 0 $. The dynamical process, governs by  partial differential inequality \eqref{M-1} is called  the process of  $\alpha  - \beta$ jumps,  if  $\tau$ in \eqref{M-1} has the property
 \begin{equation}\label{tau-nonlin}
 \tau \approx \frac{1}{u^{\alpha}\cdot|\nabla u|^{\beta}}.
 \end{equation}
\end{definition}
In the recent paper \cite{CII20} in 1-D case, it was shown that if $\tau$ does not degenerate then qualitative property of Brownian motion is similar to linear case. 
In this article we consider the process of the jumps of particles when $\tau$ degenerates and  subject to constrain \eqref{tau-nonlin}. We assume that   variance matrix is homogeneous, positively defined and  isotropic: $  \sigma_{ij}(x,t) = 2 k_{2} \delta_{ij} $, where $k_2 > 0$, $\delta_{ij}$ is the  Kronecker symbol. Let
 \begin{align} 
{L}u \triangleq u_{t} -  u^{\alpha}|\nabla u|^{\beta} \sum_{i=1}^{N} b_i(x,t) {u_{ x_i}}-
{k_2}\cdot u^{\alpha}|\nabla u|^{\beta}\Delta u \label{l-u}
\end{align}
 Under the Definition \ref{tau} and  extended assumptions in assumption \ref{ext-Eins}, the differential inequality \eqref{M-1} with the corresponding initial and boundary condition in bounded domain $ \Omega \subset \mathbb{R}^N$, we defined  $u(x,t) $ as a non-negative classical solution of the following IBVP 
\begin{align} 
\begin{cases} 
 Lu \quad \leq \quad C_{1} u^{\alpha}|\nabla u|^{\beta+1} + |A(u)|  & \text{ in } \Omega\times (0,T]
\label{model eq} \\
u(x,0)    = u_0(x)    &\mbox{ in }  \Omega   \\
u(x,t)    = 0         & \mbox{ on }  {\partial \Omega \times (0,T]}.
  \end{cases}
 \end{align}
 \begin{remark}
 Note that the operator $L$ degenerates when $u=0$ or $|\nabla u|=0$, therefore its solution $u(x,t)$ does not belong to  $\mathcal{C}^{2,1}_{x,t}(\Omega\times (0,T])$, strictly speaking. 
 Our result is qualitative and does not address the existence of the solutions, but the obtained property of the solution will be applicable for the weak viscous solution which one can define as follows.
 Let  $ \epsilon \ \text{to be positive}$, and
\begin{align}
{L_{\epsilon}}u_{\epsilon} \triangleq (u_{\epsilon})_{t} -  ({u_{\epsilon}}^{\alpha}|\nabla u_{_{\epsilon}}|^{\beta}+ \epsilon) \sum_{i=1}^{N} b_i(x,t) ({u_{\epsilon})_{ x_i}}-
{k_2}\cdot ({u_{\epsilon}}^{\alpha}|\nabla u_{\epsilon}|^{\beta}+ \epsilon)\Delta u_{\epsilon}
\end{align}
then we define $u_{\epsilon}$ as a solution of the following IBVP$_{\epsilon}$ 
\begin{align}  \label{ibvp-epsilon}
\begin{cases} 
 L_{\epsilon}u_{\epsilon} \quad \leq \quad C_{1} {u_{\epsilon}}^{\alpha}|\nabla u_{\epsilon}|^{\beta+1} + |A(u_{\epsilon})|  & \text{ in }  \Omega\times (0,T]
 \\
u_{\epsilon}(x,0)    = u_0(x)    &\mbox{ in }  \Omega   \\
u_{\epsilon}(x,t)    = 0         & \mbox{ on } {\partial \Omega \times (0,T]}.
  \end{cases}
 \end{align}

In this article we showed that   all  estimates for the solution of the IBVP$_{\epsilon}$  with $L_{\epsilon}u_{\epsilon}=0$,  do not depend on $\epsilon$. All dependence on the regularization parameter $\epsilon$ appears as separate terms in the respective estimates, and they do not depend on the solution $u_{\epsilon}$ or its derivatives. This observation allow us to pass to the limit in the final estimates, and conclude the localization property for the limiting function
\begin{equation}\label{eps_sol}
u(x,t)=\lim_{\epsilon\to 0}u_{\epsilon}(x,t),
\end{equation}   
which is considered as a weak passage to the limit (see \cite{CIL92}). The obtained function $u(x,t)$ is called a weak viscosity solution of the IBVP, which will exhibit localisation property.  
\end{remark}

 \section{ Localization property of the solution IBVP } \label{localization}
The main goal of this article is to prove the Localization Property of the solution of IBVP \eqref{model eq} with $ \displaystyle \supp u_0 \subset B_{R_{0}}(0) = \{ \vert x \mid < R_{0}\}$. In order to prove in this section let us assume that the expected value of free jumps in each direction is uniformly bounded:
\begin{align}
|\Delta_{e}^{i}| \leq k_1 < \infty.      \label{bi-bound}
\end{align}
Then we will proceed with De Giorgi-Ladyzhenskaya iteration procedure as in \cite{ves-ted}, \cite{DGV}. 
Consider the sequence of $ \displaystyle  r_n = 
2r\left(1-\dfrac{1}{2^{n+1}}\right)$ for $ n=0,1,2,\dots$ with $r > 2R_0$. Let $ \bar{r}_n  = \left(\dfrac{r_n +r_{n+1}}{2}\right)$,  $\Omega_n = \Omega \setminus B_{r_n} (0)$ and $\Bar \Omega_n = \Omega \setminus B_{\Bar r_n} (0)$. 
Note that $ \Omega_{n+1} \subset \bar{\Omega}_{n} \subset \Omega_{n} \subset \Omega  $. Let $ 0 \leq \eta_{n} \leq 1$ be a sequence of cut off functions satisfying \begin{equation}\eta_n(x) = 0 \text{ for } x\in  B_{r_n}(0) ,\eta_n(x) = 1 \text{ for }  x\in \Bar \Omega_n  \text{ and } \mid \nabla \eta_n\mid \leq \frac{c2^n}{r} \text{ otherwise.} \label{cut-off}\end{equation}
\subsection{Preliminary Lemmas}\label{pre-lemmas}
\begin{lemma}\label{main-ineq-u}
Let $u(x,t)\geq 0$  be a classical solution of  IBVP \eqref{model eq}. Let $ \theta \geq 1 $ and  $p \geq 2$  be such that 
\begin{equation}
   \beta +2 \leq p <  \left(\frac{\theta + \alpha}{\beta+1}\right) - \frac{k_1}{k_2}-C_{1}\label{p-bounds}.
\end{equation}
Then
\begin{multline}
 \sup_{0 < \tau < t }\int_{\Omega_{n+1}} u ^{\theta +1} \, dx   \text{ + }    C \int_{0}^{t} {\int_{\Omega_{n+1}}} 
 u^{\theta +\alpha-1}  |\nabla u|^{\beta+2}  \, dx d\tau   
\\
\leq D_n \int_{0}^{t} \int_{\Omega_{n}} u^{\theta +\alpha+\beta+1} \, dxd\tau     
+
(\theta+1)\int_{0}^{t}\int_{\Omega_n}   |A(u)| u^{\theta} \, dxd\tau . 
\end{multline}
for $ 0 < t \leq T $. Here
\begin{align}
 C = (\theta +1)\left[k_2 \left( \frac{\theta + \alpha}{1+\beta}\right) -k_{1} -C_{1} -  k_{2} p\right]
, D_{n} =  (\theta +1)\left[ {k_2 p}\left(\frac{c 2^{n}}{r}\right)^{\beta +2} + {k_1} + C_{1} \right]\label{C-Dn} ,
\end{align}
\normalfont{\text{and} $C_{1}$ \text{is from Assumption} \ref{B-bound}.}
\end{lemma}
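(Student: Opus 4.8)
The plan is to run a weighted energy (Caccioppoli-type) estimate: test the differential inequality in \eqref{model eq} with the nonnegative multiplier $(\theta+1)u^{\theta}\eta_n^{p}$, integrate over $\Omega\times(0,t)$, integrate the second-order term by parts, and then absorb the lower-order terms into the principal dissipation integral by Young's inequality. First I would multiply \eqref{model eq} by $(\theta+1)u^{\theta}\eta_n^{p}$ (legitimate since $u,\eta_n\geq 0$) and integrate. The parabolic term becomes $\int_0^t\int_\Omega (u^{\theta+1})_\tau\,\eta_n^{p}\,dx\,d\tau=\int_\Omega u^{\theta+1}(\cdot,t)\,\eta_n^{p}\,dx-\int_\Omega u_0^{\theta+1}\eta_n^{p}\,dx$; the choice $r>2R_0$ together with $r_n\geq r$ forces $B_{R_0}\subset B_{r_n}=\{\eta_n=0\}$, so the initial term vanishes, and since $\eta_n\equiv 1$ on $\Omega_{n+1}\subset\bar{\Omega}_n$ by \eqref{cut-off}, the remaining term dominates $\int_{\Omega_{n+1}}u^{\theta+1}\,dx$. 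Because the right-hand side is nondecreasing in $t$, this will also control the supremum in $\tau$.

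The crucial step is the diffusion term $(\theta+1)k_2\int_0^t\int_\Omega u^{\alpha+\theta}|\nabla u|^{\beta}\eta_n^{p}\,\Delta u\,dx\,d\tau$. Integrating by parts (the inner boundary term dies because $\eta_n$ vanishes near $B_{r_n}$, the outer because $u=0$ on $\partial\Omega$) produces the dissipation integrand $u^{\theta+\alpha-1}|\nabla u|^{\beta+2}\eta_n^{p}$ with coefficient $k_2(\theta+\alpha)$, together with the Hessian remainder $k_2\int_0^t\int_\Omega u^{\alpha+\theta}\eta_n^{p}\,\nabla(|\nabla u|^{\beta})\cdot\nabla u\,dx\,d\tau$ and a cut-off remainder proportional to $p\int_0^t\int_\Omega u^{\alpha+\theta}|\nabla u|^{\beta+1}\eta_n^{p-1}|\nabla\eta_n|\,dx\,d\tau$. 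Recombining the dissipation integrand with the Hessian remainder — which, through the divergence-form reduction $|\nabla u|^{\beta}\Delta u=\tfrac{1}{\beta+1}\operatorname{div}(|\nabla u|^{\beta}\nabla u)$, collapses the coefficient from $k_2(\theta+\alpha)$ to $k_2\frac{\theta+\alpha}{1+\beta}$ — is what yields the factor $\frac{1}{1+\beta}$ appearing in $C$. I expect this to be the main obstacle: for the genuinely non-divergence operator $|\nabla u|^{\beta}\Delta u$ in $\mathbb{R}^N$ the Hessian term $\nabla(|\nabla u|^{\beta})\cdot\nabla u$ carries no favorable sign, and justifying its recombination (rather than obtaining the clean $(\beta+2)$-Laplacian one would get from a divergence-form equation) is exactly where the structure of the problem, and the regularized formulation in \eqref{ibvp-epsilon}, must be exploited.

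Next I would estimate the drift term by $|\Delta_e^{i}|\leq k_1$ to get $(\theta+1)k_1\int_0^t\int_\Omega u^{\alpha+\theta}|\nabla u|^{\beta+1}\eta_n^{p}$, and the reaction contribution by Assumption \ref{B-bound} to get $(\theta+1)C_1\int_0^t\int_\Omega u^{\alpha+\theta}|\nabla u|^{\beta+1}\eta_n^{p}$, while the genuine absorption term satisfies $(\theta+1)\int_0^t\int_\Omega|A(u)|u^{\theta}\eta_n^{p}\leq(\theta+1)\int_0^t\int_{\Omega_n}|A(u)|u^{\theta}$, which is already the last term of the claim. All three $|\nabla u|^{\beta+1}$ integrals (drift, reaction, and the cut-off remainder) are then split by Young's inequality with conjugate exponents $\frac{\beta+2}{\beta+1}$ and $\beta+2$: the high-gradient factor reproduces the dissipation integrand $u^{\theta+\alpha-1}|\nabla u|^{\beta+2}$, while the low-gradient factor, by a forced computation of the exponents, yields exactly $u^{\theta+\alpha+\beta+1}$. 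Here the hypothesis $p\geq\beta+2$ from \eqref{p-bounds} is used precisely so that the surviving weight $\eta_n^{\,p-\beta-2}$ from the cut-off remainder stays bounded by $1$, and $|\nabla\eta_n|^{\beta+2}\leq(c2^{n}/r)^{\beta+2}$ supplies the factor in $D_n$.

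Finally I would choose the Young parameters so that the dissipation-eating parts contribute precisely $k_1+C_1k_2p$, leaving the net coefficient of the dissipation integral on the left equal to $C=(\theta+1)\big(k_2\frac{\theta+\alpha}{1+\beta}-k_1-C_1k_2p\big)$, which is positive exactly by the upper bound on $p$ in \eqref{p-bounds}; the residual low-gradient contributions collect into $D_n\int_0^t\int_{\Omega_n}u^{\theta+\alpha+\beta+1}$ with $D_n=(\theta+1)\big(k_2p(c2^{n}/r)^{\beta+2}+k_1+C_1\big)$, and discarding the nonnegative interior weight so that the left-hand integrals are taken over $\Omega_{n+1}$ delivers the stated inequality. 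Beyond the Hessian difficulty flagged above, the remaining work is the constant bookkeeping in the Young step, which I expect to be routine once the conjugate exponents $\frac{\beta+2}{\beta+1}$ and $\beta+2$ are fixed.
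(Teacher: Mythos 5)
Your proposal is correct and follows essentially the same route as the paper's own proof: multiply the inequality in \eqref{model eq} by $u^{\theta}\eta_n^{p}$, integrate over $\Omega\times(0,t)$ (the initial term vanishes since $\supp u_0\subset B_{R_0}\subset B_{r_n}$), integrate the diffusion term by parts, split the drift, reaction and cut-off contributions by Young's inequality with conjugate exponents $\tfrac{\beta+2}{\beta+1}$ and $\beta+2$, use $p\geq\beta+2$ to dominate the cut-off weight by $\eta_n^{p}$, and use the upper bound in \eqref{p-bounds} to keep the dissipation coefficient $C$ positive; this is exactly the chain \eqref{part-int}--\eqref{L-J-K}. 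The one step you flag as the main obstacle --- justifying the recombination $|\nabla u|^{\beta}\Delta u=\tfrac{1}{\beta+1}\operatorname{div}\left(|\nabla u|^{\beta}\nabla u\right)$, which fails for $N>1$ because the Hessian term $\nabla\left(|\nabla u|^{\beta}\right)\cdot\nabla u$ has no sign --- is not actually resolved in the paper: the paper passes silently from \eqref{model eq} to \eqref{part-int}, where the diffusion term already appears in divergence form with the factor $\tfrac{k_2}{\beta+1}$. So your concern identifies a gap that is shared by, not created by, the published argument; in that sense your write-up is, if anything, more candid about where the proof stands.
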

\begin{proof}
Let $\Omega_t \triangleq \Omega \times (0,t)$. Multiply both side of the inequality in \eqref{model eq} by $\eta_n^p u^{\theta}$, integrating by parts over $\Omega_t$ and using \eqref{bi-bound} we find
\begin{multline}
\frac{1}{\theta+1}\int_{\Omega} \eta_n^p u ^{\theta +1} \, dx - 
k_1\iint_{\Omega_t} \eta_n^p u^{\theta +\alpha} |\nabla u|^\beta 
|\nabla u |\, dx d\tau 
+\frac{k_2}{\beta +1}\iint_{\Omega_t}
\nabla\left(\eta_n^p u ^{\theta +\alpha}\right)|\nabla u|^\beta \nabla u  \ dx d\tau\\
\leq
C_{1}\iint_{\Omega_t} \eta_{n}^{p} u^{\alpha+\theta}|\nabla u|^{\beta+1} \,dxd\tau 
+
\iint_{\Omega_t} |A(u)|  \eta_{n}^{p}u^{\theta} \, dxd\tau.
\label{part-int}
\end{multline}
We compute $\nabla\left(\eta_n^p u ^{\theta +\alpha}\right) $ and \eqref{part-int} yields
\begin{multline}
\frac{1}{\theta+1}\int_{\Omega} \eta_n^p u ^{\theta +1} \, dx -k_1\iint_{\Omega_t} \eta_n^p u^{\theta +\alpha} |\nabla u|^{\beta+1}\, dx d\tau 
+\frac{k_2 (\theta +\alpha)}{\beta +1}\iint_{\Omega_t} \eta_n^{p} |\nabla u|^{\beta+2}  u ^{\theta +\alpha-1} \,dx d\tau \\
\leq
{k_2 p}\iint_{\Omega_t} \eta_n^{p-1}| \nabla \eta_n| |\nabla u|^{\beta+1} u ^{\theta + \alpha}  \, dx d\tau \\ 
+ C_{1}\iint_{\Omega_t} \eta_{n}^{p} u^{\theta+\alpha} |\nabla u|^{\beta+1} \, dxd\tau
+
\iint_{\Omega_t}  |A(u)|  \eta_{n}^{p}u^{\theta} \, dxd\tau   \label{E-6}.
\end{multline}
Apply Young's Inequity
    \begin{equation}
     \eta_n^{p-1}{}| \nabla \eta_n| |\nabla u|^{\beta+1} u ^{\theta +\alpha}
     \leq  
     \eta_n^{\frac{(p-1)(\beta+2)}{\beta+1}} |\nabla u|^{\beta+2}  u ^{\theta +\alpha-1} + | \nabla \eta_n|^{\beta+2} u^{\theta +\alpha+\beta+1}  ,
 \end{equation}
and
\begin{equation}
     |\nabla u|^{\beta+1} u^{\theta +\alpha} 
     \leq 
      |\nabla u|^{\beta+2}  u ^{\theta +\alpha-1} +  u^{\theta +\alpha+\beta+1}.
\end{equation}
Then estimate \eqref{E-6} becomes
\begin{multline}
\frac{1}{\theta+1} \int_{\Omega} \eta_n^p u ^{\theta +1} \, dx - k_1\iint_{\Omega_t} \eta_n^p u^{\theta +\alpha-1} |\nabla u|^{\beta+2}  +  \eta_n^p u^{\theta +\alpha + \beta+1}  \, dx d\tau \\
+ \frac{k_2 (\theta +\alpha)}{\beta+1}\iint_{\Omega_t} \eta_n^{p} |\nabla u|^{\beta+2}  u ^{\theta +\alpha-1}  \, dx d\tau \\
\leq
{k_2 p}
\iint_{\Omega_t} \eta_n^{{(p-1)}({\frac{\beta+2}{\beta + 1}})} u^{\theta +\alpha-1}  |\nabla u|^{\beta+2}   +   u^{\theta +\alpha+\beta+1} |\nabla \eta_n|^{\beta +2} \, dx d\tau\\
 + C_1\int_{0}^{t}\int_{\Omega_t}\eta_{n}^{p} u^{\theta +\alpha+\beta+1} +  \eta_{n}^{p} |\nabla u|^{\beta+2}  u ^{\theta +\alpha-1}  \, dxd\tau 
+
\int_{0}^{t}\int_{\Omega_t}   |A(u)|  \eta_{n}^{p}u^{\theta} \, dxd\tau. \label{post-young}
\end{multline}
Note that $ \left(  \frac{k_2(\theta + \alpha)}{1+\beta} -k_1 -C_{1}\right) \eta_n^{p} > {k_2 p} \eta_n^{{(p-1)}({\frac{\beta+2}{\beta + 1}})} $ by $ \eqref{p-bounds}$. Using \eqref{cut-off},  one can rearrange \eqref{post-young} to get 
\begin{multline}
\frac{1}{\theta+1} \int_{\Omega_{n+1}}   u ^{\theta +1} \, dx +  \int_{0}^{t}\int_{\Omega_{n+1}}
\left(  \frac{k_2(\theta + \alpha)}{1+\beta} -k_1  -C_{1}-  {k_2 p}  \right)   u^{\theta +\alpha-1}  |\nabla u|^{\beta+2} \, dx d\tau \\
\leq
{k_2}p \int_{0}^{t}\int_{\Omega_{n} \setminus {\bar{\Omega}_{n}}}  u^{\theta +\alpha+\beta+1}\left(\frac{c 2^{n}}{r}\right)^{\beta +2} \, dx d\tau
+ 
({k_1} + C_{1})\int_{0}^{t}\int_{{\Omega}_{n}}  u^{\theta +\alpha + \beta + 1} \, dx d\tau \\
+
\int_{0}^{t}\int_{\Omega_n}   |A(u)|  u^{\theta} \, dxd\tau .
\end{multline}
 So we will have the inequality
\begin{align} 
 L_{n}[u](t) \triangleq & \ \sup_{0 < \tau < t }\int_{\Omega_{n+1}}  u ^{\theta +1} \, dx    +    C \int_{0}^{t} {\int_{\Omega_{n+1}}}  u^{\theta +\alpha-1}  |\nabla u|^{\beta+2} \, dx d\tau \\
& \ \leq   D_{n} \int_{0}^{t} \int_{\Omega_{n}} u^{\theta +\alpha+\beta+1} \, dx d\tau
\ +
(\theta+1)\int_{0}^{t}\int_{\Omega_n}  |A(u)| u^{\theta} \, dxd\tau \\
\triangleq & \  \quad \quad \quad  \quad \quad  \quad  \ J_{n}[u](t)   \quad  \ \  \quad  \quad    +  \quad  \quad \quad  \quad \quad    K_{n}[u](t). \label{L-J-K} 
\end{align}
\end{proof} 
Next we introduce the following mapping
\begin{definition}  \label{L in I-n} \normalfont
Let
\begin{align}
 z \triangleq  u^{{(\theta +\alpha + \beta + 1)}/{(\beta +2)}}   \quad \text{ and } \quad  \lambda \triangleq{(\theta +1)(\beta +2)}/{(\theta +\alpha + \beta + 1)}.
 \label{z-u}
\end{align}
Define
\begin{equation}\label{I_n}
I_{n}[z](t) \triangleq \sup_{0 < \tau < t } \int_{\Omega_{n+1}} z^{\lambda} \, dx + \int_{0}^{t} \int_{\Omega_{n+1}} |\nabla z|^{\beta+2} \, dx d\tau, \quad  n = 0,1,2\dots \ .
\end{equation}
Using \eqref{z-u} one can get
\begin{align}
    {L_{n}}[u](t)=
    \sup_{0 < \tau < t } \int_{\Omega_{n+1}} z^{\lambda} \, dx \text{ + } C\int_{0}^{t} \int_{\Omega_{n+1}} \left(\frac{ \lambda}{\theta+1}\right)^{\beta+2} |\nabla z|^{\beta+2} \, dx d\tau  \triangleq \Tilde{L_{n}}[z](t).
\end{align}
Therefore
\begin{align}
  \Tilde{L_{n}}[z](t) 
    \geq
    G
\cdot I_{n}[z](t) ,
    \label{Lad-itn}
\end{align}
\end{definition}
where  $ \displaystyle G \triangleq \min 
\big(1,{ C{[\lambda/(\theta+1)]}^{\beta+2}
} \big)$.
Next in Lemma \ref{lad-lemma-1} and Lemma \ref{lad-lemma-2}, we will provide the relations for  $\Tilde{K_{n}}[z](t)$ and $\Tilde{J_{n}}[z](t)$ using $I_{n}[z](t)$. 
\begin{lemma}\label{lad-lemma-1}
Let $u(x,t)\geq 0$  be a classical solution of  IBVP \eqref{model eq} and $u$ and $z$ be as in \eqref{z-u}. Then $ \exists \text{ }  C_L,\epsilon_0 > 0$ and $  b_L > 1 $   such that 
\begin{equation}\label{Lad-itn-ineq-1}
\Tilde{J_{n}}[z](t) \leq t^{1-\Lambda} C_L b^{n-1}_L I_{n-1}^{1+\epsilon_0}[z](t) ,
\end{equation}
for any $n=1,2,\dots $ \ . Here  $ 0 < \Lambda < 1 $ and $\Tilde{J}_{n}[z](t)$  defined below in \eqref{bridge}.
\end{lemma}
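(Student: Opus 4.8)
The plan is to read \eqref{Lad-itn-ineq-1} as a parabolic Sobolev--Ladyzhenskaya multiplicative embedding for $z$, organised so that the exponent of $I_{n-1}[z]$ that finally appears is strictly larger than $1$. From \eqref{z-u} we have $u^{\theta+\alpha+\beta+1}=z^{\beta+2}$, so $\Tilde{J}_{n}[z](t)$ is, up to the prefactor $D_{n}$, the space--time integral $\iint_{\Omega_{n}}z^{\beta+2}\,dx\,d\tau$; this rewriting is the content of the bridge identity \eqref{bridge}. Since $I_{n-1}[z]$ only involves quantities on $\Omega_{n}$, I would first localize with the cut-off $\eta_{n}$ of \eqref{cut-off}: the function $w\triangleq\eta_{n}z$ is supported in $\Omega_{n}$, vanishes on $\partial\Omega_{n}$, and hence lies in $L^{\infty}(0,t;L^{\lambda})\cap L^{\beta+2}(0,t;W^{1,\beta+2}_{0})$, the class in which the embedding is valid.

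The second step is the embedding. For such $w$,
\[
\iint w^{q_{*}}\,dx\,d\tau\;\le\;C\Big(\sup_{0<\tau<t}\int w^{\lambda}\,dx\Big)^{(\beta+2)/N}\iint|\nabla w|^{\beta+2}\,dx\,d\tau,\qquad q_{*}\triangleq(\beta+2)\,\frac{N+\lambda}{N},
\]
where $q_{*}$ and the power $(\beta+2)/N$ are exactly those forced by the Gagliardo--Nirenberg interpolation $\|w\|_{L^{q_{*}}}\le C\|\nabla w\|_{L^{\beta+2}}^{\vartheta}\|w\|_{L^{\lambda}}^{1-\vartheta}$ with $\vartheta=N/(N+\lambda)$, raised to the power $q_{*}$ and integrated in $\tau$. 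Expanding $|\nabla w|^{\beta+2}\le C\big(|\nabla z|^{\beta+2}+z^{\beta+2}|\nabla\eta_{n}|^{\beta+2}\big)$ and using $|\nabla\eta_{n}|\le c2^{n}/r$, the genuine energy pieces $\sup_{\tau}\int z^{\lambda}$ and $\iint|\nabla z|^{\beta+2}$ are dominated by $I_{n-1}[z]$, while the cut-off gradient together with $D_{n}$ yields the geometric constant $b_{L}^{\,n-1}$. This gives $\iint w^{q_{*}}\le C\,b_{L}^{\,n-1}\,I_{n-1}^{\,1+(\beta+2)/N}[z](t)$.

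Finally, since $q_{*}>\beta+2$, I would descend from exponent $q_{*}$ back to $\beta+2$ by Hölder on $\Omega_{n}\times(0,t)$. With $\Lambda\triangleq(\beta+2)/q_{*}=N/(N+\lambda)\in(0,1)$,
\[
\iint z^{\beta+2}\,dx\,d\tau\;\le\;\Big(\iint w^{q_{*}}\,dx\,d\tau\Big)^{\Lambda}\big(|\Omega_{n}|\,t\big)^{1-\Lambda},
\]
which produces exactly the weight $t^{1-\Lambda}$ and raises the power of $I_{n-1}$ to $\big(1+(\beta+2)/N\big)\Lambda=(N+\beta+2)/(N+\lambda)=1+\epsilon_{0}$ with $\epsilon_{0}=(\beta+2-\lambda)/(N+\lambda)$. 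Folding $D_{n}$, $|\Omega|^{1-\Lambda}$ and the embedding constants into $C_{L}$ gives \eqref{Lad-itn-ineq-1}. The decisive point --- and the only place the structural hypotheses really enter --- is the strict inequality $\epsilon_{0}>0$, equivalently $\beta+2>\lambda$, i.e. $\theta+\alpha+\beta+1>\theta+1$, which holds because $\alpha>0$ forces $\alpha+\beta>0$; this surplus is precisely what later makes Ladyzhenskaya's fast-convergence lemma applicable in Theorem \ref{combined-ite}. I expect the real difficulty to lie not in this exponent count but in the bookkeeping of the localization: the term $z^{\beta+2}|\nabla\eta_{n}|^{\beta+2}$, supported on the transition annulus $\{r_{n}<|x|<\bar r_{n}\}\subset\Omega_{n}$, regenerates a copy of $\iint z^{\beta+2}$ inside the gradient factor, so one must trade it back --- through the same interpolation and Hölder step, at the cost of the constant $b_{L}^{\,n-1}$ --- against $I_{n-1}$ and the $t^{1-\Lambda}$ smallness, a manoeuvre that closes only because of the strict super-linearity just established.
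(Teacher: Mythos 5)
Your bridge identity and your exponent philosophy (a parabolic Gagliardo--Nirenberg estimate plus H\"older in time, with the super-linear gain $\epsilon_0>0$ coming precisely from $\alpha+\beta>0$) coincide with the paper's route, which applies \eqref{N-G} and H\"older in time to reach \eqref{pre-ite}. However, your localization step contains a genuine gap. First, an orientation problem: with $w=\eta_n z$ one has $w\le z$, and in fact $w<z$ on the transition annulus $B_{\bar r_n}(0)\setminus B_{r_n}(0)\subset\Omega_n$, so your final H\"older step, which bounds $\iint_{\Omega_n}z^{\beta+2}\,dx\,d\tau$ by a power of $\iint w^{q_*}\,dx\,d\tau$, is an inequality in the wrong direction; it fails whenever $z$ carries most of its mass where $\eta_n<1$. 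If you instead cut off one index back ($w=\eta_{n-1}z$, which does equal $z$ on all of $\Omega_n$), the left-hand side is fine, but then $|\nabla w|$ produces energy quantities living on $\Omega_{n-1}$, which are controlled by $I_{n-2}[z](t)$, not by $I_{n-1}[z](t)$, so you do not obtain the stated recursion \eqref{Lad-itn-ineq-1}. Second, the regenerated term $z^{\beta+2}|\nabla\eta_n|^{\beta+2}$, which you do acknowledge, cannot be removed by ``trading it back'': what your scheme yields is a self-referential inequality of the shape $X\le C\,b^{n}\,t^{1-\Lambda}(I+X)^{1+\epsilon_0}$ with $X=\iint z^{\beta+2}\,dx\,d\tau$, and a super-linear power on the right does not permit absorption of $X$ without an a priori smallness or a continuity-in-$t$ argument; nothing of that kind is available here, since the lemma must hold for every classical solution and every $0<t\le T$.

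The paper's proof avoids both difficulties because no new cut-off is introduced at this stage: the cut-off was spent once and for all in Lemma \ref{main-ineq-u}, where the annulus term $u^{\theta+\alpha+\beta+1}|\nabla\eta_n|^{\beta+2}=z^{\beta+2}|\nabla\eta_n|^{\beta+2}$ was already folded into the coefficient $D_n$ of \eqref{C-Dn}. In Lemma \ref{lad-lemma-1} the Gagliardo--Nirenberg inequality \eqref{N-G} is applied to $z$ itself on $\Omega_n$ (using that $z$ inherits the homogeneous condition on $\partial\Omega$), so the quantity being estimated and both factors on the right are integrals over the \emph{same} set $\Omega_n$ --- exactly the two constituents of $I_{n-1}[z](t)$, with no leftover annulus piece --- and the geometric factor $b_L^{n-1}$ comes solely from the elementary bound \eqref{N-ineq} on $D_n$. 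A secondary discrepancy: your exponents $\Lambda=N/(N+\lambda)$ and $\epsilon_0=(\beta+2-\lambda)/(N+\lambda)$, produced by going up to $q_*>\beta+2$ and descending by H\"older in space-time, are not the paper's \eqref{zeta} and \eqref{epsilon-0}; since Theorem \ref{combined-ite} and Theorem \ref{I-0 bound} quote those specific values, you would need to restate them consistently (your route also makes $C_L$ depend on $|\Omega|$ through the factor $|\Omega_n|^{1-\Lambda}$). But this bookkeeping issue is cosmetic next to the localization gap.
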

\begin{proof}
By the substitution \eqref{z-u}
\begin{align}
J_{n}[u](t) \triangleq  D_{n} \int_{0}^{t} \int_{\Omega_{n}} u^{\theta +\alpha+\beta+1} \, dx d\tau
= 
{D_n}\int_{0}^{t} \int_{\Omega_{n}} z^{\beta +2} \, dx d\tau 
\triangleq
\Tilde{J_{n}}[z](t).
\label{bridge}
\end{align}
By Gagliardo-Nirenberg-Sobolev inequality (see \cite{LAU}) we obtain
\begin{align}
\int_{\Omega_{n}} z^{\beta+2}  \, dx 
& \leq c_G \left[ \int_{\Omega_{n}} |\nabla z|^{\beta+2} \, dx \right]^{\Lambda}\left[ \int_{\Omega_{n}} z^{\lambda} \, dx\right]^\frac{(1-\Lambda)(\beta+2)}{\lambda} ,
\label{N-G}
\end{align}
with \begin{equation}\label{zeta}
    \Lambda \triangleq [\alpha + \beta] \big / [\alpha + \beta +N(\beta +2)(\theta +1)] .   
\end{equation} 
Integrating above inequality over time and using Holder inequality, we get
\begin{align}
\int_{0}^{t}\int_{\Omega_{n}} z^{\beta+2}  \, dx d\tau 
& \leq
c_G t^{1-\Lambda} \left[ \int_{0}^{t} \int_{\Omega_{n}} |\nabla z|^{\beta+2} \, dxd\tau \right]^{\Lambda} d\tau
\left[\sup_{0 < \tau < t } \int_{\Omega_{n}} z^{\lambda} \, dx\right]^\frac{(1-\Lambda)(\beta+2)}{\lambda} .
\label{bridge2}
\end{align}
Using \eqref{bridge2} in \eqref{bridge} provides estimate
\begin{align}
\Tilde{J_{n}}[z](t)
\leq 
{D_n}
c_G t^{1-\Lambda} I_{n-1}^{\Lambda + \frac{(1-\Lambda)(\beta+2)}{\lambda}}[z](t)
\label{pre-ite} .
\end{align}
By \eqref{zeta}, let 
\begin{align}\label{epsilon-0}
    \epsilon_0 \triangleq N \Lambda (\beta+2) ,
\end{align}
 then $ \displaystyle {\Lambda + \frac{(1-\Lambda)(\beta+2)}{\lambda}}= 1 + \epsilon_0$.
Moreover by \eqref{p-bounds}  and \eqref{C-Dn} one can get
\begin{align}
D_n \leq (\theta +1) \left(k_1 + \left(\frac{c}{2R_{0}} \right) k_2 (\theta +\alpha)\right) 2^{{n\beta}+2n}\label{N-ineq} .
\end{align}
Therefore \eqref{pre-ite} will lead to
\begin{align}
    \Tilde{J_{n}}[z](t) \leq t^{1-\Lambda}C_Lb^{n-1}_L I_{n-1}^{1+\epsilon_0}[z](t)
\label{ite-sheme} .
\end{align}
Here $ 
C_L= (\theta +1) \left(k_1+ \left(\frac{c}{2R_{0}} \right) k_2 (\theta +\alpha)\right){c_G} 2^{{\beta}+2}
\quad \text{ and } \quad b_L =2^{\beta+2}$. 
\end{proof}
\begin{remark}\label{inf-speed-1}
Note that by \eqref{epsilon-0} 
\begin{align}
\epsilon_0 = 
[N(\alpha + \beta)(\beta+2)] \big / [\alpha + \beta +N(\beta +2)(\theta +1)] \label{epsilon - 1}    .
\end{align}
Let $ \alpha=0$ and $ \beta=0$ in $Lu=0$ in \eqref{l-u} which provides the non-Degenerate  parabolic equation  without absorption. Then  $\epsilon_0$ in \eqref{ite-sheme} will vanish. As we will see later, in this case Lemma \ref{Lady-lemma} will not provide Localization property as it intended. This reflects an important feature of the solution of parabolic equation which is called infinite speed of propagation. Indeed  due to strong maximum principle if $u(x,t)$ is non-negative solutions of second order-linear parabolic and  $ u(x_0,t_0) = 0 $ then it will vanish to zero on all subordinates to this point sub domain (see {\cite{lan-deadzone}}).
\end{remark}
\begin{lemma}\label{lad-lemma-2}
Let  $u$ and  $z$ be as in \eqref{z-u} .
\begin{enumerate}
\item [{\normalfont(P1)}]
Let $ \alpha > 0,\beta \geq 0$ and $N >1$  be such that $ \exists \text{ } \theta \geq 1 \text{ and } s \geq 1$, satisfy 
\begin{align}
    \max\left\{1+\epsilon_0, N\epsilon_0\right\}\leq s < \min\left\{\beta+2,N(1+\epsilon_0) \right\} ,  
    \quad \text{ where }\epsilon_{0} \text{ is from } \eqref{epsilon - 1} .
    \label{R-1} 
\end{align}
\vspace{-0.5 cm}
Define 
\begin{align}
 \gamma \triangleq \left(\frac{1+\epsilon_0}{s} - \frac{1}{\beta+2}\right)\lambda +1  ,\quad  \text{where } \lambda \text{ is from } \eqref{z-u} .	\label{gamma-def} \end{align}
\end{enumerate}
For given $A$, assume that  $\exists $ a transformation $F $ satisfying the following properties:
\begin{enumerate}
\setcounter{enumi}{2}
	\item [{\normalfont(P2)}]
	$ |A(u)| u^{\theta}  = F^{s}(z^{\gamma})$
	\item [{\normalfont(P3)}]
	$ F^{/} \leq M_{0} $ in $\Omega $. Here $^{./} \triangleq \frac{d}{dz}(\cdot)$ .
	\item [{\normalfont(P4)}]
	$F(z^{\gamma}(x)) \in  W_0^{1,m}(\Omega_n,\partial \Omega)$  where $ 1 \leq m < N $.
\end{enumerate} Then $ \exists
\text{ } M_L > 0 $  such that
\begin{align}
\Tilde{K_{n}}[z](t) \leq 
t^{\frac{\beta+2-s}{\beta+2}}M_Lb_L^{n-1} I_{n-1}^{1+ \epsilon_0}(t) .
\label{Lad-itn-ineq-2} 
\end{align}
 for any $n=1,2,\dots $ \ . Here $ \Tilde{K}_{n}[z](t)$ defined below in \eqref{K-t def} .
\end{lemma}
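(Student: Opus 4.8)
The plan is to mirror the proof of Lemma \ref{lad-lemma-1}, replacing the pure power $z^{\beta+2}$ by the nonlinear absorption term, which property (P2) lets us rewrite as $F^{s}(z^{\gamma})$. Concretely, starting from the definition of $K_n$ in \eqref{L-J-K} and applying (P2), I set
\[
\Tilde{K_{n}}[z](t)\triangleq(\theta+1)\int_{0}^{t}\int_{\Omega_{n}}|A(u)|\,u^{\theta}\,dx\,d\tau=(\theta+1)\int_{0}^{t}\int_{\Omega_{n}}F^{s}(z^{\gamma})\,dx\,d\tau .
\]
The engine of the estimate is, exactly as in \eqref{N-G}, a spatial Gagliardo--Nirenberg--Sobolev interpolation, applied now to the auxiliary function $w\triangleq F(z^{\gamma})$, followed by H\"older's inequality in time to extract the factor $t^{(\beta+2-s)/(\beta+2)}$. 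Property (P4) is precisely what licenses this interpolation, since it places $w$ in a Sobolev space $W_0^{1,m}(\Omega_n,\partial\Omega)$ with $1\le m<N$.

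First I would record two pointwise consequences of (P3) together with $A(0)=0$ (so that (P2) forces $F(0)=0$): since $F'\le M_0$ one has $F(z^{\gamma})\le M_0\,z^{\gamma}$ and $|\nabla F(z^{\gamma})|=|F'(z^{\gamma})|\,|\nabla z^{\gamma}|\le M_0\,|\nabla z^{\gamma}|=M_0\gamma\,z^{\gamma-1}|\nabla z|$. These reduce every $w$-quantity produced by the interpolation to powers of $z$ and of $|\nabla z|$. Applying the interpolation inequality to $w$ at a fixed time and then invoking these bounds, followed by a H\"older step in space that splits $z^{\gamma-1}|\nabla z|$ into a factor carrying $|\nabla z|^{\beta+2}$ and a residual power of $z$, produces a spatial estimate of the schematic form
\[
\int_{\Omega_n}F^{s}(z^{\gamma})\,dx\le c\left(\int_{\Omega_n}|\nabla z|^{\beta+2}\,dx\right)^{\!s/(\beta+2)}\left(\int_{\Omega_n}z^{\lambda}\,dx\right)^{\!P},
\]
where the residual exponent $P\ge0$ on the low-order term is fixed by homogeneity. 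The definition $\gamma=\left(\frac{1+\epsilon_0}{s}-\frac{1}{\beta+2}\right)\lambda+1$ in (P1) is exactly the calibration that makes $\frac{s}{\beta+2}+P=1+\epsilon_0$, with $\epsilon_0$ from \eqref{epsilon - 1}.

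Next I would integrate in time. Bounding the spatial $L^{\lambda}$ factor by its supremum in $\tau$ and pulling it out, H\"older's inequality in $\tau$ applied to the remaining factor $\big(\int_{\Omega_n}|\nabla z|^{\beta+2}\,dx\big)^{s/(\beta+2)}$ with conjugate exponents $(\beta+2)/s$ and $(\beta+2)/(\beta+2-s)$ --- admissible because $s<\beta+2$ by \eqref{R-1} --- yields the bound
\[
\Tilde{K_{n}}[z](t)\le C\,t^{\frac{\beta+2-s}{\beta+2}}\left(\int_0^t\int_{\Omega_n}|\nabla z|^{\beta+2}\,dx\,d\tau\right)^{\!s/(\beta+2)}\left(\sup_{0<\tau<t}\int_{\Omega_n}z^{\lambda}\,dx\right)^{\!P}.
\]
Since each parenthesized factor is dominated by $I_{n-1}[z](t)$ from \eqref{I_n}, multiplying the two estimates gives $I_{n-1}^{\,P+s/(\beta+2)}(t)=I_{n-1}^{\,1+\epsilon_0}(t)$, which is \eqref{Lad-itn-ineq-2} with $M_L$ absorbing $c$, $(\theta+1)$, $M_0$, $\gamma$ and the Sobolev constant. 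The factor $b_L^{n-1}\ge1$ (with $b_L=2^{\beta+2}>1$ as in Lemma \ref{lad-lemma-1}) may be inserted freely, since it only weakens the bound and keeps the statement in the uniform form of \eqref{ite-sheme} needed in Theorem \ref{combined-ite}.

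The main obstacle is the exponent bookkeeping rather than any single inequality. One must check simultaneously that the Gagliardo--Nirenberg interpolation exponent lies in $[0,1]$, that both the spatial and temporal H\"older exponents are admissible, and that the constraint $1\le m<N$ in (P4) is compatible with the Sobolev conjugate dictated by the scaling relation; this is exactly the role of the two-sided window $\max\{1+\epsilon_0,N\epsilon_0\}\le s<\min\{\beta+2,N(1+\epsilon_0)\}$ in \eqref{R-1}. The second delicate point is that $F$ is known only through $F'\le M_0$ and $F(0)=0$, so the entire argument must proceed through the one-sided bounds $F(z^{\gamma})\le M_0 z^{\gamma}$ and $|\nabla F(z^{\gamma})|\le M_0|\nabla z^{\gamma}|$ rather than through any explicit formula for $F$; keeping all constants independent of $n$, so that the $n$-dependence is confined to $b_L^{n-1}$, is what ultimately permits the iterative use of this estimate.
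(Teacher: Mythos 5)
Your proposal is correct and follows essentially the same route as the paper's proof: rewrite $\Tilde{K_{n}}$ via (P2), apply the Sobolev (Poincar\'e--Sobolev) embedding to $F(z^{\gamma})$ with the gradient bound from (P3) (the paper makes your implicit choice explicit by taking $m=s/(1+\epsilon_0)$, which is exactly what the window \eqref{R-1} makes admissible), split $z^{(\gamma-1)m}|\nabla z|^{m}$ by spatial H\"older so that the calibration of $\gamma$ produces the $z^{\lambda}$ factor, and finish with H\"older in time using $s<\beta+2$ to extract $t^{(\beta+2-s)/(\beta+2)}$ and bound both factors by $I_{n-1}$. Your observation that $b_L^{n-1}\geq 1$ is inserted only to match the iterative form also agrees with the paper, where no $n$-dependent constant actually arises in this lemma's estimates.
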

\begin{proof}
\text{Recall that } 
\begin{equation}\label{K-t def}
K_{n}[u](t) \triangleq  (\theta+1)\int_{0}^{t}\int_{\Omega_n}   |A(u)|  u^{\theta} \, dxd\tau  = (\theta+1)\int_{0}^{t}\int_{\Omega_n}  \mid F^{s}(z^{\gamma}) \mid \, dxd\tau \triangleq  \Tilde{K_{n}}[z](t)  .
\end{equation}
Let $ \displaystyle m \triangleq \frac{s}{1+\epsilon_0} $. By property \eqref{R-1} one has $ \displaystyle 1\leq s \leq  \frac{Nm}{N-m}$ . Consequently by Poincare-Sobolev inequality in (see \cite{EVAN}) 
\begin{align}
   \int_{\Omega_n} \mid F(z^\gamma)\mid^s \, dx
    \leq 
    \left(c_p {\gamma}\sup_{x \in \Omega_n} \mid F^{/} \mid\right)^{s}  
  \left [ \int_{\Omega_n}   z^{(\gamma-1)m}\mid \nabla  z\mid^{m} \, dx \right]^{1+\epsilon_{0}} .
  \label{poincare-ineq}
\end{align}
Using Holder Inequality we get
\begin{align}
     \left[ \int_{\Omega_n}   z^{(\gamma-1)m}\mid \nabla  z\mid^{m} \, dx \right]^{1+\epsilon_{0}}
    \leq
     \left[ \int_{\Omega_n} \mid \nabla z \mid ^{\beta+2} \, dx\right]^{(1+\epsilon_0)H}
     \cdot
     \left[ \int_{\Omega_n} z^{\lambda} \, dx \right]^{(1+\epsilon_0)(1-H)}
    \label{H-1} ,
\end{align}
with $ \displaystyle H \triangleq \frac{s}{[1+\epsilon_0][\beta+2]} $ . 
Note that $\displaystyle \frac{\gamma-1}{1-H} = \frac{\lambda}{m} $,   by \eqref{gamma-def} in Lemma \ref{lad-lemma-2}. Integrating  \eqref{poincare-ineq} over time and using \eqref{H-1} we obtain
 \begin{align}
 \int_{0}^{t}  \int_{\Omega_n} \mid F(z^\gamma)\mid^s \, dx d\tau
\leq 
   M 
 \int_{0}^{t} 
 \left( \int_{\Omega_n} \mid \nabla z \mid ^{\beta+2} \, dx\right)^{(1+\epsilon_0)H} d\tau
     \cdot
    \left[\sup_{0<\tau<t} \int_{\Omega_n}  z^{\lambda}\, dx \right]^{{(1+\epsilon_0)}{(1-H)}} ,
\end{align}
where $\displaystyle M^{1/s} \triangleq  c_p {\gamma}\sup_{(x,t) \in \Omega_n\times (0,t)} \mid F^{/} \mid$.
Applying Holder inequality in time 
 {\begin{align}
\int_{0}^{t}\int_{\Omega_n}  \mid F^{s}(z^{\gamma}) \mid \, dxd\tau
&\leq 
   Mt^{\frac{\beta+2-s}{\beta+2}}
      \left[\int_{0}^{t}   \int_{\Omega_n}\mid \nabla z\mid^{\beta+2} \, dx d\tau\right]^{\frac{s}{\beta+2}}\left[\sup_{0<\tau<t} \int_{\Omega_n}  z^{\lambda}\, dx \right]^{{(1+\epsilon_0)}{(1-H)}} .
      \label{H-2}
\end{align}}
Observe that  $ \displaystyle {{[1+\epsilon_0]}{[1-H]} + \frac{s}{\beta+2}} = 1 + \epsilon_0 $ . Then using \eqref{H-2} in \eqref{K-t def}  one has
  \begin{align}
  \Tilde{K_{n}}[z](t) \leq 
t^{\frac{\beta+2-s}{\beta+2}}M_Lb_L^{n-1} I_{n-1}^{1+ \epsilon_0}[z](t) ,
  \end{align}
where $ 
    M_L=(\theta+1)
 c_p^s{\gamma^s} \sup_{(x,t) \in \Omega_n\times (0,t)} \mid F^{/} \mid^s
$ .
\end{proof}
Combining the obtained inequalities  \eqref{Lad-itn}, \eqref{Lad-itn-ineq-1} and \eqref{Lad-itn-ineq-2} then \eqref{L-J-K} yields to generate the following iterative inequality.
\subsection{Localization property}\label{FSP}
\begin{theorem}\label{combined-ite}
Assume that all conditions in Lemmas \ref{main-ineq-u}, \ref{lad-lemma-1} and \ref{lad-lemma-2} are satisfied. Let $u(x,t)\geq 0$ be a classical solution of IBVP \eqref{model eq} and functions $u(x,t)$ and $z(x,t)$ be related as \eqref{z-u}. Then 
\begin{align}
    I_{n}[z](t) \leq  
     t^{q}
    \left(\frac{C_L +M_L}{G}\right)b^{n-1}_L  I_{n-1}^{1+ \epsilon_0}[z](t)  \label{I_n vs I_n-1}.
\end{align}
\end{theorem}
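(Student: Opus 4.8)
The plan is to assemble this theorem by simply chaining together the four estimates already in hand, so that its only genuine content is the bookkeeping of constants and time exponents rather than any new analysis. I would begin from the master inequality \eqref{L-J-K}, namely $L_n[u](t) \le J_n[u](t) + K_n[u](t)$, and at once translate each functional into the $z$-variable. The identities \eqref{bridge} and \eqref{K-t def} give $J_n[u](t) = \Tilde{J_{n}}[z](t)$ and $K_n[u](t) = \Tilde{K_{n}}[z](t)$ verbatim, while Definition \ref{L in I-n} rewrites $L_n[u](t) = \Tilde{L_{n}}[z](t)$. Thus \eqref{L-J-K} becomes $\Tilde{L_{n}}[z](t) \le \Tilde{J_{n}}[z](t) + \Tilde{K_{n}}[z](t)$, an inequality now expressed entirely in $z$.

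Next I would invoke the lower bound \eqref{Lad-itn}, $\Tilde{L_{n}}[z](t) \ge G\, I_n[z](t)$, to replace the left-hand side, obtaining $G\, I_n[z](t) \le \Tilde{J_{n}}[z](t) + \Tilde{K_{n}}[z](t)$. The two summands are then controlled separately by the preliminary lemmas: \eqref{Lad-itn-ineq-1} bounds $\Tilde{J_{n}}$ by $t^{1-\Lambda} C_L b_L^{n-1} I_{n-1}^{1+\epsilon_0}[z](t)$, and \eqref{Lad-itn-ineq-2} bounds $\Tilde{K_{n}}$ by $t^{(\beta+2-s)/(\beta+2)} M_L b_L^{n-1} I_{n-1}^{1+\epsilon_0}[z](t)$. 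The decisive structural feature is that both right-hand sides carry the same geometric factor $b_L^{n-1}$ and the same nonlinear factor $I_{n-1}^{1+\epsilon_0}[z](t)$, so I can pull these out of the sum simultaneously, leaving only the two constants $C_L, M_L$ and the two distinct powers of $t$ to reconcile.

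The single point demanding a short argument is the mismatch between the time exponents $1-\Lambda$ and $(\beta+2-s)/(\beta+2)$. Both are strictly positive, since $\Lambda < 1$ by \eqref{zeta} and $s < \beta+2$ by \eqref{R-1}, so I would set $q \triangleq \min\{\,1-\Lambda,\ (\beta+2-s)/(\beta+2)\,\}$ and exploit $0 < t \le T$ to dominate the larger power: whichever exponent $a$ exceeds $q$ satisfies $t^{a} = t^{q}\,t^{a-q} \le T^{a-q}\,t^{q}$, a finite prefactor which I absorb into $C_L$ and $M_L$. After this homogenization the two terms combine into $(C_L + M_L)\, t^{q} b_L^{n-1} I_{n-1}^{1+\epsilon_0}[z](t)$, and dividing through by $G$ produces exactly \eqref{I_n vs I_n-1}.

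The main obstacle here is thus notational rather than analytic: it is the step of collapsing two different powers of $t$ into a single $t^{q}$ and presenting the constant cleanly as $(C_L + M_L)/G$. I would take particular care to record that $q$, $b_L$, $C_L$, $M_L$, and $G$ depend only on the structural parameters $\alpha,\beta,N,\theta,s$ and on $T$, and never on $n$ nor on the solution itself. This uniformity in $n$ is precisely what allows the resulting iterative inequality \eqref{I_n vs I_n-1} to be fed into the Ladyzhenskaya fast-convergence lemma in the next step, where the exponent $1+\epsilon_0 > 1$ drives $I_n$ to zero and delivers the localization property.
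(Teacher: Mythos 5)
Your proof is correct and follows essentially the same route as the paper: both arguments chain \eqref{L-J-K} with the lower bound \eqref{Lad-itn} and the upper bounds \eqref{Lad-itn-ineq-1} and \eqref{Lad-itn-ineq-2}, pull out the common factors $b_L^{n-1}$ and $I_{n-1}^{1+\epsilon_0}[z](t)$, and divide by $G$. The only (harmless) deviation is the treatment of the time factor: the paper defines $t^{q} \triangleq \max[t^{1-\Lambda}, t^{(\beta+2-s)/(\beta+2)}]$ so that both powers are dominated without altering $C_L$ or $M_L$, whereas you fix $q$ as the minimum of the two exponents and absorb a factor of the form $T^{a-q}$ into the constants, which gives the same iterative inequality up to a $T$-dependent constant and feeds into Lemma \ref{Lady-lemma} just as well.
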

for $ 0 < t \leq T $ and  $n=1,2,\dots$ \ . Here  $t^{q} \triangleq \max \big\{  t^{1-\Lambda},t^{\frac{\beta+2-s}{\beta+2}} \big\} $,
$\Lambda $ is from \eqref{zeta} and $s$ is from \eqref{R-1}.
Next we prove localization property using the following generic Lemma by  Ladyzhenskaya in  \cite{LAU}.
\begin{lemma}\label{Lady-lemma}
Let sequence $y_n$ for  $n=0,1,2,...$ be non-negative, satisfying the recursion inequality
\[y_{n+1}\leq c\text{ }b^n \text{ }y_n^{1+\epsilon} ,\] for $n = 0,1 ,2,...$ with some positive constants $ c ,\epsilon > 0 \text{ and } b\geq 1$. Then \[    y_n \leq c^{\frac{(1+\epsilon)^n -1}{\epsilon}}\text{ } b^{\frac{(1+\epsilon)^n -1}{\epsilon^2} -\frac{n}{\epsilon}}\text{ }y_0^{(1+\epsilon)^n}.\]
In particular if $ y_0 \leq \theta_L = c^\frac{-1}{\epsilon} \text{ } b^\frac{-1}{\epsilon^2} \text{ and } b > 1  \text{ then } y_n \leq \theta_L\text{ } b^{\frac{-n}{\epsilon}} $. Consequently \[ y_n \rightarrow 0 \text{ when } n\rightarrow \infty . \]
\end{lemma}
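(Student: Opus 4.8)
The plan is to establish the explicit estimate by induction on $n$, and then to read off the two consequences by substituting the smallness hypothesis on $y_0$.

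First I would write the target bound in the factored form $y_n \leq c^{A_n}\, b^{B_n}\, y_0^{(1+\epsilon)^n}$, where the exponents are exactly those appearing in the statement, namely $A_n = \big((1+\epsilon)^n - 1\big)/\epsilon$ and $B_n = \big((1+\epsilon)^n - 1\big)/\epsilon^2 - n/\epsilon$. The base case $n = 0$ is immediate, since $A_0 = B_0 = 0$ reduces the bound to $y_0 \leq y_0$. For the inductive step I would feed the inductive hypothesis into the recursion $y_{n+1} \leq c\, b^n\, y_n^{1+\epsilon}$: raising the hypothesis to the power $1+\epsilon$ and collecting powers gives $y_{n+1} \leq c^{\,1 + (1+\epsilon)A_n}\, b^{\,n + (1+\epsilon)B_n}\, y_0^{(1+\epsilon)^{n+1}}$. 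It then suffices to verify the two recursions
\[
A_{n+1} = 1 + (1+\epsilon) A_n, \qquad B_{n+1} = n + (1+\epsilon) B_n,
\]
each of which is a one-line identity obtained by writing $(1+\epsilon)^{n+1} = (1+\epsilon)(1+\epsilon)^n$ and simplifying. This closes the induction and yields the claimed closed form for all $n$.

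For the sharp consequence I would substitute $y_0 \leq \theta_L = c^{-1/\epsilon}\, b^{-1/\epsilon^2}$ into the explicit bound. The resulting exponent of $c$ is $A_n - (1+\epsilon)^n/\epsilon = -1/\epsilon$, and the exponent of $b$ is $B_n - (1+\epsilon)^n/\epsilon^2 = -1/\epsilon^2 - n/\epsilon$; in both cases the terms carrying $(1+\epsilon)^n$ cancel exactly, which is the whole point of the calibrated threshold $\theta_L$. Hence $y_n \leq c^{-1/\epsilon}\, b^{-1/\epsilon^2}\, b^{-n/\epsilon} = \theta_L\, b^{-n/\epsilon}$. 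Since $b > 1$ and $\epsilon > 0$, we have $b^{-n/\epsilon} \to 0$, so $y_n \to 0$ as $n \to \infty$.

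The computation is entirely elementary, and I expect the only step demanding genuine care to be the algebraic verification of the two recursions for $A_n$ and $B_n$, where the $\epsilon$ and $\epsilon^2$ denominators together with the linear-in-$n$ term in $B_n$ must combine precisely; a sign or bookkeeping slip there would propagate through both corollaries. Once those identities are confirmed, the general estimate, the sharpened bound $y_n \leq \theta_L\, b^{-n/\epsilon}$, and the convergence $y_n \to 0$ all follow mechanically.
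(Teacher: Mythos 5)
Your proof is correct: the induction closes because the exponent recursions $A_{n+1}=1+(1+\epsilon)A_n$ and $B_{n+1}=n+(1+\epsilon)B_n$ are exactly satisfied by $A_n=\frac{(1+\epsilon)^n-1}{\epsilon}$ and $B_n=\frac{(1+\epsilon)^n-1}{\epsilon^2}-\frac{n}{\epsilon}$, and the cancellation of the $(1+\epsilon)^n$ terms under the threshold $\theta_L=c^{-1/\epsilon}b^{-1/\epsilon^2}$ gives $y_n\leq \theta_L b^{-n/\epsilon}\to 0$. The paper itself offers no proof of this lemma, quoting it as a known result of Ladyzhenskaya (see \cite{LAU}); your induction is the standard argument behind that citation, so there is nothing further to reconcile.
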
 
 From inequality \eqref{I_n vs I_n-1} and Lemma \ref{Lady-lemma}
it follows the main theorem on localization.
\begin{theorem}\label{I-0 bound}
Assume that all conditions in Theorem \ref{combined-ite} are satisfied. Let
\begin{equation}\label{I_oassump}
I_{0}[z](T) \leq {2^{-\left(\frac{\beta+2}{\epsilon_0^2}\right)}} {\left(\frac{G}{C_L+M_L}\right)^{\frac{1}{\epsilon_0}}}T^{-\frac{q}{\epsilon_0}}.
\end{equation}
\text{then }
\begin{equation}\label{In-to-0}
I_{n}[z](T)\rightarrow 0 \text{ as } n\rightarrow \infty .
\end{equation}
\end{theorem}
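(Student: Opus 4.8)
The plan is to recognize that inequality \eqref{I_n vs I_n-1} of Theorem \ref{combined-ite}, evaluated at $t=T$, is exactly of the recursive form covered by Ladyzhenskaya's generic Lemma \ref{Lady-lemma}, and then to check that the smallness hypothesis \eqref{I_oassump} on $I_0[z](T)$ reproduces verbatim the threshold $\theta_L$ appearing in that lemma. First I would set $y_n \triangleq I_n[z](T)$ and rewrite \eqref{I_n vs I_n-1} after the index shift $n\mapsto n+1$, obtaining
\[
y_{n+1} \leq c\, b_L^{\,n}\, y_n^{1+\epsilon_0}, \qquad n=0,1,2,\dots,
\]
with the identifications $c \triangleq T^{q}\left(\frac{C_L+M_L}{G}\right)$, $b \triangleq b_L = 2^{\beta+2}$ and $\epsilon \triangleq \epsilon_0$. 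Here it is essential that $C_L$, $M_L$ and $G$ are genuine constants independent of $n$: the entire $n$-dependence of \eqref{I_n vs I_n-1} is carried by the factor $b_L^{\,n-1}$, while $C_L,b_L$ come from Lemma \ref{lad-lemma-1}, $G$ from \eqref{Lad-itn}, and $M_L$ from Lemma \ref{lad-lemma-2}, where property (P3) bounds $F'\leq M_0$ uniformly over $\Omega$ and hence over every $\Omega_n\subset\Omega$.

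Next I would check $b_L > 1$, which holds because $\beta \geq 0$ forces $b_L = 2^{\beta+2}\geq 4$. With $b_L>1$ and $\epsilon_0>0$, Lemma \ref{Lady-lemma} applies and produces the threshold
\[
\theta_L = c^{-1/\epsilon_0}\, b_L^{-1/\epsilon_0^2}
= \left[ T^{q}\,\frac{C_L+M_L}{G} \right]^{-1/\epsilon_0} b_L^{-1/\epsilon_0^2}
= T^{-q/\epsilon_0}\left(\frac{G}{C_L+M_L}\right)^{1/\epsilon_0} 2^{-(\beta+2)/\epsilon_0^2}.
\]
A direct comparison shows this $\theta_L$ coincides exactly with the right-hand side of the assumption \eqref{I_oassump}. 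Thus the hypothesis $y_0 = I_0[z](T)\leq\theta_L$ of Lemma \ref{Lady-lemma} is satisfied, and the lemma yields
\[
I_n[z](T) = y_n \leq \theta_L\, b_L^{-n/\epsilon_0} \longrightarrow 0 \qquad \text{as } n\to\infty,
\]
which is precisely \eqref{In-to-0}.

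Because the argument is a substitution into a ready-made iteration lemma, there is no serious analytic obstacle; the only real care needed is bookkeeping. The delicate points are: shifting the index so that the power of $b_L$ reads $b_L^{\,n}$ rather than $b_L^{\,n-1}$ (so the lemma's normalization matches), and confirming algebraically that the prefactor $c^{-1/\epsilon_0} b_L^{-1/\epsilon_0^2}$ reproduces the exact constant in \eqref{I_oassump}. I would also flag that $\epsilon_0 > 0$ is indispensable for the exponents $1/\epsilon_0$ and $1/\epsilon_0^2$ to be finite; by Remark \ref{inf-speed-1} this fails exactly in the non-degenerate case $\alpha=\beta=0$, consistent with the loss of localization there.
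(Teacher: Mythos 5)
Your proposal is correct and follows exactly the paper's own route: the paper deduces Theorem \ref{I-0 bound} directly by applying Lemma \ref{Lady-lemma} to the iterative inequality \eqref{I_n vs I_n-1} at $t=T$, with the threshold $\theta_L = c^{-1/\epsilon_0} b_L^{-1/\epsilon_0^2}$ matching the right-hand side of \eqref{I_oassump}. Your write-up in fact supplies more detail than the paper (the index shift, the verification $b_L = 2^{\beta+2} > 1$, and the explicit algebraic check that the threshold coincides with \eqref{I_oassump}), all of which is accurate.
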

By the De- Giorgi construction we have  $  \  \Omega _{n+1}\subset \Omega_{n}$ . From above it follows that:
\begin{corollary}\label{col-FSP}
Let $u(x,t)\geq 0$ be a classical solution of IBVP \eqref{model eq} and $u$ and $z$ be related as in \eqref{z-u}. Assume as before   condition \eqref{I_oassump}. Then $ z(x,t) = 0 \text { a.e. in } \Omega_{\infty}=\bigcap_{i=n}^{\infty} \Omega_i \text{ for any } t \leq T$.
Consequently 
\begin{equation} 
u (x,t)= 0 \quad \text{ a.e. in } \quad  \Omega\setminus{B_{2r}(0)} \quad  \text{ for any } \quad   t\leq T.
\end{equation}
\end{corollary}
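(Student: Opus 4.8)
The plan is to combine the convergence $I_{n}[z](T)\to 0$ supplied by Theorem \ref{I-0 bound} with the monotone nesting of the domains $\Omega_{n}$ to extract the a.e.\ vanishing of $z$, and then to transfer this to $u$ through the purely algebraic relation \eqref{z-u}. All of the analytic work has already been done upstream: Theorem \ref{combined-ite} produces the iterative inequality $I_{n}\le t^{q}\frac{C_{L}+M_{L}}{G}b_{L}^{n-1}I_{n-1}^{1+\epsilon_{0}}$, and the Ladyzhenskaya convergence Lemma \ref{Lady-lemma} turns the smallness hypothesis \eqref{I_oassump} into $I_{n}[z](T)\to 0$. So the corollary is mainly a matter of correctly identifying the limiting set and passing to the limit.

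First I would pin down the limit set precisely. Since $r_{n}=2r\left(1-2^{-(n+1)}\right)$ increases to $2r$, the excluded open balls $B_{r_{n}}(0)$ increase to $B_{2r}(0)$, so the decreasing family $\Omega_{n}=\Omega\setminus B_{r_{n}}(0)$ satisfies
\[
\Omega_{\infty}=\bigcap_{i=1}^{\infty}\Omega_{i}=\Omega\setminus B_{2r}(0),
\]
the intersection being independent of the starting index because the sequence is nested decreasing. The one inclusion I will exploit is therefore $\Omega_{\infty}\subset\Omega_{n+1}$ for every $n$.

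Next I would estimate the integral over $\Omega_{\infty}$ by the functional $I_{n}$. Since $u\ge 0$ forces $z=u^{(\theta+\alpha+\beta+1)/(\beta+2)}\ge 0$ and hence $z^{\lambda}\ge 0$, the inclusion $\Omega_{\infty}\subset\Omega_{n+1}$ gives, for every $t\le T$ and every $n$,
\[
\sup_{0<\tau<t}\int_{\Omega_{\infty}}z^{\lambda}\,dx\;\le\;\sup_{0<\tau<t}\int_{\Omega_{n+1}}z^{\lambda}\,dx\;\le\;I_{n}[z](t),
\]
the last inequality because $I_{n}[z](t)$ in \eqref{I_n} is the sum of that supremum term and a nonnegative gradient term. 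Letting $n\to\infty$ and invoking Theorem \ref{I-0 bound}, whose hypothesis is exactly \eqref{I_oassump}, forces $\sup_{0<\tau<T}\int_{\Omega_{\infty}}z^{\lambda}\,dx=0$. Because the integrand is nonnegative, this yields $z(\cdot,t)=0$ a.e.\ in $\Omega_{\infty}$ for each $t\le T$.

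Finally I would translate back to $u$: the exponent $(\theta+\alpha+\beta+1)/(\beta+2)$ in \eqref{z-u} is strictly positive, so $z=0$ is equivalent to $u=0$ a.e.; together with $\Omega_{\infty}=\Omega\setminus B_{2r}(0)$ this produces $u(\cdot,t)=0$ a.e.\ in $\Omega\setminus B_{2r}(0)$ for every $t\le T$, which is the asserted localization. The argument is short, so there is no serious obstacle; the only steps demanding care are confirming that $\bigcup_{n}B_{r_{n}}(0)$ exhausts $B_{2r}(0)$ so that the localization radius is correctly identified, and making sure the nonnegativity $z^{\lambda}\ge 0$ is used to legitimately drop from $\Omega_{n+1}$ to $\Omega_{\infty}$ in the monotone passage to the limit.
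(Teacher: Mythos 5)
Your proposal is correct and follows exactly the route the paper intends: the paper's own (largely implicit) argument is precisely that Theorem \ref{I-0 bound} gives $I_{n}[z](T)\to 0$, the De~Giorgi nesting $\Omega_{n+1}\subset\Omega_{n}$ identifies $\Omega_{\infty}=\Omega\setminus B_{2r}(0)$, and the positive power in \eqref{z-u} transfers the vanishing of $z$ to $u$. You have merely written out in full the monotone passage to the limit that the paper leaves to the reader.
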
 
Next, we can control value of the integral in \eqref{I_oassump} explicitly via initial data by assuming that in  IBVP \eqref{model eq}   reaction term to be equal zero. In this case  
\begin{equation}
\therefore \quad I_{0}[z](t) \leq 
\frac{D_{0}}{G} \int_{0}^{t} \int_{\Omega_{0}} z^{\beta +2} \, dx d\tau 
\leq \frac{D_{0}|\Omega|}{G} \Vert z\Vert_{L^{\infty}[(\Omega_{0})\times (0,t)]} ^{\beta+2} \cdot t .
\end{equation}
This estimate leads to the following 
\begin{theorem} \label{ max }
Let $ A (u) = 0 $ in IBVP \eqref{model eq} and assume that all conditions in Lemma \ref{main-ineq-u} are satisfied. Let $z(x,t)$ be such that in \eqref{z-u} and $ 
  \Vert z\Vert_{L^{\infty}[(\Omega_{0})\times (0,t)]}
 \leq 2^{-\frac{1}{\epsilon_{0}^2}} B_{0}^{\frac{1}{\beta+2}} t^{-{(\alpha + \beta + \theta + 1)}/{(\alpha + \beta)(\beta+2)}}
$ for $ 0 < t \leq T $.
Then
\begin{equation}\label{I0-estimate}
  I_{0}[z](t) \leq 2^{-\left(\frac{\beta+2}{\epsilon_{0}^2}\right)}\left(\frac{G}{C_L}\right)^{\frac{1}{\epsilon_{0}}} t^{-\frac{\theta+1}{\alpha+\beta}} \triangleq \theta_{L} .
\end{equation}
Here $\epsilon_{0}$ from \eqref{epsilon-0}, $ \displaystyle  B_{0} \triangleq \frac { C_{L}^{-\frac{1}{\epsilon_{0}}} G^{1+\frac{1}{\epsilon_{0}}}}{{D_{0}|\Omega_{0}|}} $ and $|  \cdot   |$ is the  size of domain.
\end{theorem}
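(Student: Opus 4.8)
The plan is to establish \eqref{I0-estimate} by a direct substitution into the scalar a priori bound on $I_{0}[z](t)$ recorded just before the statement, feeding in the prescribed decay of $\Vert z\Vert_{L^{\infty}}$ and then verifying that the definition of $B_{0}$ is precisely the one that collapses the leading constant into $(G/C_L)^{1/\epsilon_0}$. First I would recall how that scalar bound arises. Specializing Lemma \ref{main-ineq-u} and Definition \ref{L in I-n} to $n=0$ and using $A(u)=0$, so that $K_{0}[u](t)=0$ and hence $\Tilde{K_{0}}[z](t)=0$, the chain \eqref{Lad-itn}--\eqref{L-J-K} gives
\[
G\cdot I_{0}[z](t)\leq \Tilde{L_{0}}[z](t)=L_{0}[u](t)\leq J_{0}[u](t)=D_{0}\int_{0}^{t}\int_{\Omega_{0}} z^{\beta+2}\,dx\,d\tau,
\]
which is exactly the displayed estimate preceding the theorem. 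I then bound the space--time integral crudely by the supremum norm,
\[
\int_{0}^{t}\int_{\Omega_{0}} z^{\beta+2}\,dx\,d\tau\leq |\Omega_{0}|\,t\,\Vert z\Vert_{L^{\infty}[(\Omega_{0})\times(0,t)]}^{\beta+2},
\]
so that $I_{0}[z](t)\leq \frac{D_{0}|\Omega_{0}|}{G}\,\Vert z\Vert_{L^{\infty}}^{\beta+2}\,t$.

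Next I would insert the hypothesis on $\Vert z\Vert_{L^{\infty}}$. Raising it to the power $\beta+2$ produces a factor $2^{-(\beta+2)/\epsilon_0^2}B_{0}$ together with $t^{-(\alpha+\beta+\theta+1)/(\alpha+\beta)}$; multiplying by the remaining $t$ collapses the time exponent to $1-(\alpha+\beta+\theta+1)/(\alpha+\beta)=-(\theta+1)/(\alpha+\beta)$, which already matches the target power of $t$ in \eqref{I0-estimate}. The only genuine bookkeeping is the constant: substituting $B_{0}=C_{L}^{-1/\epsilon_{0}}G^{1+1/\epsilon_{0}}/(D_{0}|\Omega_{0}|)$ cancels the factor $D_{0}|\Omega_{0}|/G$ and leaves exactly $(G/C_L)^{1/\epsilon_0}$, so that
\[
I_{0}[z](t)\leq 2^{-(\beta+2)/\epsilon_0^2}\Bigl(\tfrac{G}{C_L}\Bigr)^{1/\epsilon_0}t^{-(\theta+1)/(\alpha+\beta)}=\theta_{L}.
\]

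The step that actually deserves care — and the place I would double-check — is the consistency of this $\theta_{L}$ with the hypothesis \eqref{I_oassump} of Theorem \ref{I-0 bound}, since the whole purpose of the statement is that \eqref{I0-estimate} verifies that hypothesis and thereby triggers Lemma \ref{Lady-lemma}. When $A(u)=0$ we have $M_L=0$, so $G/(C_L+M_L)=G/C_L$, and $q$ reduces to $1-\Lambda$; using \eqref{zeta} and \eqref{epsilon-0} one checks the exponent identity $q/\epsilon_0=(1-\Lambda)/\epsilon_0=(\theta+1)/(\alpha+\beta)$, so the $T$-exponent $-q/\epsilon_0$ in \eqref{I_oassump} coincides with $-(\theta+1)/(\alpha+\beta)$. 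Thus \eqref{I0-estimate} is not merely an estimate but exactly the threshold condition $I_{0}\leq\theta_{L}$, from which $I_{n}[z](T)\to 0$ and the localization of Corollary \ref{col-FSP} follow. No analytic difficulty beyond these power identities is expected: the hypotheses on $\Vert z\Vert_{L^{\infty}}$ are engineered so that every exponent aligns, and the proof is essentially an exercise in tracking the constants $D_{0}$, $|\Omega_{0}|$, $G$, $C_L$ through the substitution.
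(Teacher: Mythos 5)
Your proposal is correct and follows essentially the same route as the paper: the paper's own argument is precisely the displayed bound $I_{0}[z](t)\leq \frac{D_{0}|\Omega|}{G}\Vert z\Vert_{L^{\infty}}^{\beta+2}\,t$ stated just before the theorem, into which the hypothesized $L^{\infty}$ decay and the definition of $B_{0}$ are substituted, with the exponent arithmetic $1-(\alpha+\beta+\theta+1)/(\alpha+\beta)=-(\theta+1)/(\alpha+\beta)$ exactly as you carried it out. Your additional verification that, when $A(u)=0$ (so $M_L=0$, $q=1-\Lambda$), the identity $(1-\Lambda)/\epsilon_0=(\theta+1)/(\alpha+\beta)$ makes $\theta_L$ coincide with the threshold \eqref{I_oassump} is a correct and worthwhile check that the paper leaves implicit.
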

\begin{corollary}
Let $u(x,t)\geq 0$ be a classical solution of IBVP \eqref{model eq}. Let the initial function $u_{0}(x)$ be such that  $\supp u_0(x)\in B_{R_0}(0)$ and 
\begin{equation}
    \Vert u_{0} \Vert_{L^{\infty}(\Omega)} \leq 2^{-\mu}\left[{\frac{ C_{L}^{-\frac{1}{\epsilon_{0}}} G^{1+\frac{1}{\epsilon_{0}}}}{D_{0}|\Omega|} }\right]^{\frac{1}{\alpha+\beta+\theta+1}} \cdot
    T^{-\frac{1}{\alpha+\beta}}
    \label{int-con} ,
\end{equation}
where $ \displaystyle \mu = \frac{[\alpha+\beta + N (\beta+2)(\theta+1) ]^2}{(\beta+2){N^2}{(\alpha+\beta)^2}(\alpha+\beta+\theta+1)}$. Then by the maximum principle function $z(x,t)$ which relate with $u(x,t)$ in \eqref{z-u}  satisfies $I_{0}[z](t) \leq \theta_{L}$.
Then due to Theorem \ref{I-0 bound}  if \eqref{int-con} holds then 
\begin{equation} 
 u (x,t)= 0 \quad \text{ a.e. in } \quad  \Omega\setminus{B_{2r}(0)} ,
\end{equation}
\end{corollary}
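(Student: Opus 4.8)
The plan is to reduce this Corollary to the already-established Theorem~\ref{I-0 bound} by showing that the hypothesis \eqref{int-con} on $\|u_0\|_{L^\infty}$ forces the smallness threshold \eqref{I_oassump} on $I_0[z](T)$. Since we are in the reaction-free regime $A(u)=0$, the term $\tilde K_n$ of \eqref{K-t def} vanishes identically, so $M_L=0$ and the master recursion \eqref{I_n vs I_n-1} of Theorem~\ref{combined-ite} collapses to the single-term inequality $I_n[z](t)\leq t^{1-\Lambda}(C_L/G)\,b_L^{n-1}I_{n-1}^{1+\epsilon_0}[z](t)$, because with $K_n\equiv0$ only the exponent $1-\Lambda$ of Lemma~\ref{lad-lemma-1} survives in $t^{q}$. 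Correspondingly the threshold in \eqref{I_oassump} reduces to exactly the quantity $\theta_L$ produced by Theorem~\ref{ max }, namely $\theta_L=2^{-(\beta+2)/\epsilon_0^2}(G/C_L)^{1/\epsilon_0}T^{-(\theta+1)/(\alpha+\beta)}$; here one uses the identity $(1-\Lambda)/\epsilon_0=(\theta+1)/(\alpha+\beta)$, which follows at once from the definitions \eqref{zeta} and \eqref{epsilon - 1} of $\Lambda$ and $\epsilon_0$. Thus it suffices to produce the bound $I_0[z](T)\leq\theta_L$.

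The bridge from the initial data to $I_0$ is Theorem~\ref{ max }, whose single hypothesis is a space--time $L^\infty$ bound on $z$ over $\Omega_0\times(0,T)$. First I would invoke the maximum principle to control $z$ by the initial data. Because $L$ degenerates where $u=0$ or $|\nabla u|=0$, this is carried out on the regularized problem IBVP$_\epsilon$ in \eqref{ibvp-epsilon}: the operator $L_\epsilon$ is uniformly parabolic, the boundary values vanish, and at an interior spatial maximum one has $\nabla u_\epsilon=0$, so the degenerate drift and diffusion terms and the right-hand side $C_1 u_\epsilon^{\alpha}|\nabla u_\epsilon|^{\beta+1}$ all vanish there; the comparison principle for quasilinear parabolic subsolutions then gives $\|u_\epsilon(\cdot,t)\|_{L^\infty(\Omega)}\leq\|u_0\|_{L^\infty(\Omega)}$ uniformly in $\epsilon$. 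Passing to the limit \eqref{eps_sol} yields $\sup_{0<\tau\leq t}\|u(\cdot,\tau)\|_{L^\infty(\Omega)}\leq\|u_0\|_{L^\infty(\Omega)}$, and since by \eqref{z-u} $z$ is a monotone power of $u$,
\[
\|z\|_{L^\infty(\Omega_0\times(0,T))}\;\leq\;\|z\|_{L^\infty(\Omega\times(0,T))}\;\leq\;\|u_0\|_{L^\infty(\Omega)}^{(\theta+\alpha+\beta+1)/(\beta+2)} .
\]

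Next I would feed \eqref{int-con} into this inequality. Since the maximum-principle bound on $\|z\|_{L^\infty(\Omega_0\times(0,t))}$ is independent of $t$, whereas the requirement of Theorem~\ref{ max } decreases in $t$ like $t^{-(\theta+\alpha+\beta+1)/[(\alpha+\beta)(\beta+2)]}$, it suffices to verify the inequality at the endpoint $t=T$, after which it holds automatically for every $t\in(0,T]$. A direct exponent computation then closes the loop: raising \eqref{int-con} to the power $(\theta+\alpha+\beta+1)/(\beta+2)$ turns the prefactor $2^{-\mu}$ into $2^{-1/\epsilon_0^2}$ precisely because $\mu(\theta+\alpha+\beta+1)/(\beta+2)=1/\epsilon_0^2$, which is the defining role of the exponent $\mu$; it converts the factor $T^{-1/(\alpha+\beta)}$ into the required $T^{-(\theta+\alpha+\beta+1)/[(\alpha+\beta)(\beta+2)]}$, and turns the bracket into $B_0^{1/(\beta+2)}$. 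With the hypothesis of Theorem~\ref{ max } verified, that theorem delivers $I_0[z](t)\leq\theta_L$ for all $t\leq T$, so \eqref{I_oassump} holds; Theorem~\ref{I-0 bound} then gives $I_n[z](T)\to0$, and Corollary~\ref{col-FSP} converts this decay into $z=0$, hence $u=0$, a.e.\ on $\Omega\setminus B_{2r}(0)$ for every $t\leq T$.

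The main obstacle is the rigorous maximum-principle step for a genuinely degenerate \emph{inequality} rather than an equation: one must argue on the regularized problem, confirm the $\epsilon$-uniformity of the sup bound, and check that the nonnegative right-hand side $C_1 u_\epsilon^{\alpha}|\nabla u_\epsilon|^{\beta+1}\geq0$ does not break the comparison (it does not, since it vanishes at a spatial maximum together with the degenerate coefficients). A secondary bookkeeping point is the mismatch between $|\Omega|$ in \eqref{int-con} and $|\Omega_0|$ in Theorem~\ref{ max }; since $\Omega_0\subset\Omega$ gives $|\Omega_0|\leq|\Omega|$, the version stated with $|\Omega|$ is the more restrictive one and the argument passes through unchanged.
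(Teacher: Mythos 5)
Your proposal is correct and takes essentially the same route the paper intends: the maximum principle bounds $\Vert z\Vert_{L^{\infty}}$ by a power of $\Vert u_{0}\Vert_{L^{\infty}}$, condition \eqref{int-con} then verifies the hypothesis of Theorem \ref{ max } (giving $I_{0}[z](t)\leq\theta_{L}$, i.e.\ \eqref{I_oassump} in the $A\equiv 0$ regime where $M_{L}=0$ and $q=1-\Lambda$), and Theorem \ref{I-0 bound} together with Corollary \ref{col-FSP} yields $u=0$ a.e.\ in $\Omega\setminus B_{2r}(0)$ for $t\leq T$. The details you supply beyond the paper's sketch --- the identities $(1-\Lambda)/\epsilon_{0}=(\theta+1)/(\alpha+\beta)$ and $\mu(\alpha+\beta+\theta+1)/(\beta+2)=1/\epsilon_{0}^{2}$, the reduction to the endpoint $t=T$, the $\epsilon$-regularized maximum-principle argument, and the harmless replacement of $|\Omega_{0}|$ by $|\Omega|$ --- all check out and simply make explicit what the paper leaves implicit.
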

for $ r > 2R_{0}$  and  $ t\leq T $. Observe that the estimate \eqref{int-con} depends on volume  $|\Omega|$. Smaller the volume "bigger" initial data are allowed. Note that Vespri and Tedeeve (see \cite{ves-ted}) obtained boundedness of the solution of  degenerate parabolic equation without drift and right hand side (R.H.S.) for equation in divergence form without use of the maximum principle.
\begin{section}{Discussion}
Localization was first proposed by Zeldovich in 1950 (see \cite{zelbar58}) and then proved by Barenblatt by constructing  ``Barenblatt" type of barrier for corresponding degenerate non-linear PDE (see \cite{KV}).
Localisation property closely  relates to the property which in some sources is called a ``dead zone".  This property for solution of the steady state elliptic equation with strong absorption  was investigated in the work by Landis \cite{lan-deadzone} using his methods of lemma of growth for narrow domain. 
In the recent work by Vespri and Tedev method based on De- Giorgi (see \cite{degiorgi}) and Ladyzhenskaya iterative process (see \cite{LAU}) was employed to prove this essential feature for the class of degenerate parabolic equation in divergent form in \cite{ves-ted} . We use this as our groundwork and provide proof of the Localization property for degenerate parabolic equation in non-divergent form derived from generalized Einstein paradigm, which has a clear physical interpretation. It is appropriate to mention that Landis used his method to provide alternative prove of the De-Giorgi celebrated theorem  on Holder continuity of the solution of elliptic equation of second order (see \cite{Lan-el-par}). We believe that by employing Landis method we can significantly widen class of the equations for which localisation property holds by including absorption term.
\end{section}
\begin{section}{Conclusion}
Einstein paradigm was implemented for generalized Brownian motion process with drift and absorption or reaction.
We consider processes which allow implementation of Einstein paradigm for dynamic process  with   time interval of free jump to be inverse proportional to density and norm of its gradient.
The problem was reduced to a nonlinear partial differential inequality with drift and absorption or reaction.
We proved that this type of processes of $\alpha-\beta$ jumps , exhibits Localization  property.
To prove this Ladyzhenskaya- De Giorgi iterative procedure  was used.
Obtained result has very clear physical interpretation.
\end{section}


\end{document}